\documentclass[12pt]{article}
\usepackage{latexsym,color,amsmath,amsthm,amssymb,amscd,amsfonts}
\usepackage{pgfplots}
\usetikzlibrary{positioning}
\usepackage{tikz}
\usetikzlibrary{arrows}
\usepackage{scalefnt}
\usepackage[font=small,labelfont=bf]{caption}
\usepackage{float}
\usepackage{graphicx}
\usepackage{amsopn}
\usepackage{relsize}
\usepackage{mathrsfs}
\usepackage{upgreek}
\usepackage[pdftex]{hyperref}

\renewcommand{\epsilon}{\varepsilon}
\setlength{\textwidth}{6.0in} \setlength{\evensidemargin}{0.25in}
\setlength{\oddsidemargin}{0.25in} \setlength{\textheight}{9.0in}
\setlength{\topmargin}{-0.5in} \setlength{\parskip}{2mm}
\setlength{\baselineskip}{1.7\baselineskip}

\newtheoremstyle{nonum}{}{}{\itshape}{}{\bfseries}{.}{ }{\thmnote{#3}}

\newtheorem{thm}{Theorem}[section]
\newtheorem*{thm*}{Theorem}

\newtheorem{lem}[thm]{Lemma}
\newtheorem{prop}[thm]{Proposition}
\newtheorem{rem}[thm]{Remark}

\newtheorem{definition}[thm]{Definition}
\newtheorem*{definition*}{Definition}

\newtheorem*{rems*}{Remarks}

\theoremstyle{nonum}


\newcommand{\R}{\mathbb R}
\newcommand{\E}{\mathbb E}

\def\A{{\cal A}}
\def\L{{\cal L}}

\def\J{{\cal J}}



\newcommand{\iprod}[2]{\langle #1,#2 \rangle} 

\newcommand{\kn}{\mathcal{K}^n}			



\def\calL{{\cal L}}
\def\L{{\calL}}

\def\vol{{\rm Vol}}

\def\Jf{\J\varphi}

\def\epi{{\rm epi}\,}
\def\epif{\epi \varphi }
\def\epiff{\epi(\varphi)}
\def\epij{\epi(\J\varphi)}
\def\gcf{{\rm{Cvx}}_0(\R^n)}
\def\cvx{\rm{Cvx}}

\newcommand{\jinfp}{\diamond\kern-0.62em{\cdot}} 

\begin{document}
\title{A Santal\'{o}-type Inequality for the $\J$ Transform}
\date{}
\author{D.I. Florentin, A. Segal}
\maketitle
\begin{abstract}
This paper deals with an analog of the Mahler volume product related
to the $\J$ transform acting in the class of geometric convex
functions $\gcf$. We provide asymptotically sharp bounds for
the quantity $s^{\J}(f) = \frac{\int e^{-\J f}}{\int e^{-f}}$ and
characterize all the extremal functions.
\end{abstract}

\section{Introduction}
The classical Blaschke-Santal\'{o} inequality states that the Mahler
volume, an af{}fine invariant functional given by
\[
s(K) =
\inf
\left\{
\vol_n(K - x) \cdot \vol_n((K - x)^\circ) \,\Big|\, x\in K
\right\},
\]
is maximized by ellipsoids. Here $K^\circ$ stands
for the dual body of $K$, namely
\[
K^\circ =
\left\{
y\in\R^n \,\Big|\, \forall x\in K,\iprod{x}{y}\le 1
\right\}.
\]
Analysis of the Mahler volume dates back to 1917, when Blaschke
proved that in dimensions $n = 2,3$, every convex body $K$ satisfies
$s(K) \le s(B^n_2)$ (see \cite{Bla}). Santal\'{o} \cite{Sant}
extended this
inequality to every dimension (see \cite{MeyPaj} for a simple proof
due to Meyer and Pajor). However, finding the minimal value of the
Mahler volume remains an open problem to this day. Mahler conjectured
that for centrally symmetric bodies, the Mahler volume attains its
minimum value on cubes, i.e. $s(K)\ge s(B_\infty^n) = \frac{4^n}{n!}$
(it is now known that any Hanner polytope has the same Mahler volume
as the cube). The Mahler conjecture would imply that the minimum and
maximum of $s(K)$ differ only by a factor of $c^n$, a fact which was
verified in the celebrated Bourgain-Milman theorem \cite{BM}. They
proved that there exists a universal constant $c$ such that:
\begin{equation*}\label{Ineq-Bour-Mil}
c \le \left( \frac{s(K)}{s(B_2^n)} \right) ^\frac1n,
\end{equation*}
thus settling the Mahler conjecture in the asymptotic sense. In
recent years, several new proofs of the Bourgain-Milman inequality
have been discovered, by Kuperberg \cite{Kup}, Nazarov \cite{Naz},
and by Giannopoulos, Paouris, and Vritsiou \cite{GPV}, Kuperberg
setting the value of the largest known constant $c$ at
$\frac{\pi}4$. In the general case (where the body $K$ is not assumed
to be centrally symmetric), minimizers are conjectured to be centered
simplices. Mahler \cite{Mahler} solved the planar case, and the
(centrally symmetric) three dimensional case has been settled
recently by Iriyeh and Shibata \cite{IriShi} (see also \cite{FHMRZ}
for a simplified proof).

Similar questions were considered for the class of convex functions
in $\R^n$, which we denote by $\cvx(\R^n)$. The analog of volume of a
convex function $\varphi$ is usually defined to be the integral
$\int_{\R^n}e^{-\varphi}$, and the dual of $\varphi$ is obtained by
applying the Legendre transform
\[
(\mathcal{L}\varphi)(y) = \sup_{x\in\R^n} (\iprod{x}{y}-\varphi(x)).
\]
The following infimum over translations is a functional analogue of
the Mahler volume.
\[
s^{\L}(\varphi) = \inf_{a \in \R^n}
\left\{\int e^{-T_a\varphi}\int e^{-\L T_a \varphi}\right\}.
\]
Here (and only here) $T_a\varphi(x) = \varphi(x-a)$. Ball proved in
\cite{Ball2} for even functions, and Artstein, Klartag, and Milman
proved in \cite{AKM} for any function, that
\begin{equation}\label{eq-ML-upper}
s^{\L}(\varphi) \le s^{\L}\left( \frac{|\cdot|^2}{2} \right) =
(2\pi)^n.
\end{equation}
The lower bound for $s^\L$ was established by Klartag and Milman in
\cite{KM} for even functions, and by Fradelizi and Meyer in
\cite{FradMey} for all functions, namely
\begin{equation}\label{eq-ML-lower}
c^n \le s^{\L}(\varphi),
\end{equation} 
where $c>0$ is some universal constant. For a large family of convex
functions there is another choice of duality besides the Legendre
transform, namely the polarity transform $\A$, which first appeared
in \cite{Rockafellar}. Artstein and Milman proved in \cite{AM-Hidden}
that in $\cvx_0(\R^n)$, the class of non-negative convex functions
vanishing at $0$, the only dualities (i.e. order reversing
involutions) are the Legendre transform $\L$ and the polarity
transform $\A$, which may be defined by
\[
(\A\varphi)(x) = \sup_y \frac{\iprod{x}{y}-1}{\varphi(y)}.
\]
One may similarly consider the Mahler volume with respect to $\A$:
\[
s^{\A}(\varphi) = \int e^{-\varphi}\int e^{-\A \varphi}.
\]
It was shown in \cite{ArtSlom} for even functions, that
\begin{equation}\label{eq-MA-upper-lower}
c^n \left(\vol_n\left(B_2^n\right)\right)^2
\le s^{\A}(\varphi) \le
\left(\vol_n\left(B_2^n\right) n! \right)^2 \left(1+\frac{C}{n}\right).
\end{equation}
Finding maximizers and minimizers of $s^{\A}$ (and even proving their
existence) remains an open problem. The composition $\J = \A \L =
\L \A\,$ of the two order reversing involutions $\L$ and $\A$ is an
order preserving involution (see \cite{AM-Hidden} and \cite{AFM} for
more details on the $\J$ transform). Since $\J$ is order preserving,
the product $\int e^{-\varphi} \int e^{-\J \varphi}$ cannot be
bounded from above, or bounded away from zero. Thus, it makes sense
to consider the following quantity, whenever $\int e^{-\varphi}$ and
$\int e^{-\J \varphi}$ are both positive and finite:
\[
s^{\J}(\varphi) = \frac{\int e^{-\J \varphi}}{\int e^{-\varphi}}
\]
To avoid trivial exceptions we use the convention $\frac00 =
\frac\infty\infty = 1$, thus defining $s^\J$ on the whole of
$\cvx_0(\R^n)$ (as it is not hard to verify that $\int e^{-\varphi} =
0$ if and only if $\int e^{-\J\varphi} = 0$, and similarly
$\int e^{-\varphi} = \infty$ if and only if $\int e^{-\J \varphi} =
\infty$).
The purpose of this note is the study of the functional $s^{\J}$. In
our first theorem we describe all maximizers and minimizers of
$s^{\J}$. To this end we use the following notation. The class of all
compact convex sets in $\R^n$, with non empty interior, which contain
the origin is denoted by $\kn$. For any $K\in\kn$ and $a > 0$, let
\[
\psi_{K, a} = \max\left\{1_K^\infty,\, a||\cdot||_{K}\right\},
\]
where $||\cdot||_{K}$ is the Minkowski functional of the body $K$,
and the convex indicator function $1_K^\infty$ is defined to be $0$
on $K$ and $+\infty$ otherwise. It turns out
(see e.g. the proof of Theorem \ref{Thm-maximizers})
that the value of $s^\J(\psi_{K, a})$
depends only on $a$, and not on the body $K$. Thus in the following
theorem, which describes all the maximizers of $s^\J$, the Euclidean
ball $B_2^n$ may be replaced by any $K\in\kn$.
\begin{thm}\label{Thm-maximizers}
For every $n \ge 1$ there exists $a_n > 0$ such that for every
$\varphi \in \cvx_0(\R^n)$, we have
\[
s^{\J}(\J\psi_{B_2^n,a_n}) \le
s^{\J}(\varphi)      \le
s^{\J}(\psi_{B_2^n, a_n}).
\]
Equality holds only if $\,\varphi = \psi_{K, a}$ for some
$K\in\kn, a > 0$. Moreover, $\lim_{n\to\infty} n a_n = 1$.
\end{thm}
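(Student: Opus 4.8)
The plan is to reduce the problem to a one–parameter optimization. The first and central observation is a reduction principle: for an arbitrary $\varphi \in \cvx_0(\R^n)$ I would show that replacing $\varphi$ by a function of the form $\psi_{K,a}$ can only move $s^\J$ toward its extremes. Concretely, I would use the explicit description of $\J$ on $\cvx_0(\R^n)$ from \cite{AM-Hidden}, \cite{AFM}: since $\J = \A\L$, and since the sublevel sets of $\varphi$ and of $\A\varphi$, $\L\varphi$ are related by polarity/Legendre duality of convex bodies, one can compute $\int e^{-\varphi}$ and $\int e^{-\J\varphi}$ via the layer-cake formula as integrals over $t$ of $\vol_n(\{\varphi \le t\})$ and $\vol_n(\{\J\varphi \le t\})$. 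The functions $\psi_{K,a}$ are exactly those whose sublevel sets are dilates of a single body $K$ (together with the "floor" at level $0$ coming from $1_K^\infty$), so they are the degenerate/extremal profiles. I expect that a rearrangement argument — fixing the "shape" data and optimizing over the radial profile — shows $s^\J(\varphi)$ is squeezed between the two values $s^\J(\psi_{K,a})$ and $s^\J(\J\psi_{K,a})$ for a suitable $a$, with equality characterization forcing $\varphi$ itself to be such a $\psi_{K,a}$. The Santaló-type inequality for $\A$, inequality \eqref{eq-MA-upper-lower}, and the Legendre Santaló inequalities \eqref{eq-ML-upper}, \eqref{eq-ML-lower} are the natural inputs controlling the non-degenerate directions.

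Next I would establish the claimed invariance: $s^\J(\psi_{K,a})$ depends only on $a$. The key is that $\L\psi_{K,a}$ and $\A\psi_{K,a}$ each have sublevel sets that are dilates of $K^\circ$ (the polar body), and $\J\psi_{K,a} = \A\L\psi_{K,a}$ therefore has sublevel sets that are dilates of $(K^\circ)^\circ = K$ again — so both integrals $\int e^{-\psi_{K,a}}$ and $\int e^{-\J\psi_{K,a}}$ are products of $\vol_n(K)$ (or $\vol_n(K^\circ)$) with one-dimensional integrals in $a$; by the Santaló inequality the body-dependent volume factors cancel in the ratio only if one is careful, but in fact the cancellation here is exact because $\J$ returns to the same body $K$, not to an ellipsoid. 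I would carry out this computation explicitly, producing a closed-form (or integral) expression $g_n(a) := s^\J(\psi_{K,a})$, and note that $s^\J(\J\psi_{K,a}) = 1/g_n(a)$ since $\J$ is an involution, so the two-sided bound in the theorem is just $\min_a g_n(a)^{-1} \le s^\J \le \max_a g_n(a)$; one then sets $a_n := \arg\max_a g_n(a)$, whose existence and uniqueness follow from analyzing $g_n$.

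It remains to analyze $g_n(a)$ and prove $\lim_{n\to\infty} n a_n = 1$. Here I would write $g_n(a)$ as a ratio of explicit integrals in dimension $n$ — essentially Gamma-function expressions coming from $\int_0^\infty r^{n-1} e^{-(\text{profile})}\,dr$ — differentiate in $a$, and show the derivative has a unique zero. For the asymptotics, I would substitute $a = t/n$ and perform a Laplace/steepest-descent analysis of $\log g_n(t/n)$ as $n \to \infty$: the maximizing $t$ should converge to $1$, giving $n a_n \to 1$. The main obstacle, I expect, is the reduction step in the first paragraph — showing rigorously that \emph{every} $\varphi$, not just radial ones, is dominated by some $\psi_{K,a}$ on the relevant side of the inequality, and pinning down the equality cases. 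This likely requires a symmetrization argument compatible with $\J$ (a delicate point, since $\J$ mixes the Legendre and polarity transforms, whose natural symmetrizations differ), together with a convexity/monotonicity lemma for the one-dimensional profile functional. The Gamma-integral asymptotics and the uniqueness of $a_n$, by contrast, I expect to be routine though calculation-heavy.
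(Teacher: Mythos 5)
Your second paragraph is on solid ground: the invariance of $s^\J(\psi_{K,a})$ in $K$ (via $\J\psi_{K,a}=\psi_{\frac1a K,\frac1a}$), the involution relation $s^\J(\J\varphi)=1/s^\J(\varphi)$, and the resulting symmetry between maximizers and minimizers all match the paper. But the central step — your first paragraph's ``reduction principle'' — is a genuine gap, and you more or less concede it. There are two separate reductions hidden there, and neither of your proposed tools handles the hard one. The easy reduction (arbitrary $\varphi$ to a spherically symmetric profile) does \emph{not} need the Santal\'o inequalities for $\A$ and $\L$ at all: both $\int e^{-\varphi}$ and $\int e^{-\J\varphi}$ are weighted integrals of $\vol_n(L_z(\varphi))$ over $z$ (the latter with weight $e^{-1/z}z^{-(n+2)}$, Lemma~\ref{lem-Steiner-J}), so $s^\J$ depends only on level-set volumes and one may pass to the Schwarz symmetrization for free. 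The hard reduction (radial profile to the two-piece profile $\psi_{K,a}$) is where the real work lies, and neither rearrangement nor the bounds \eqref{eq-ML-upper}--\eqref{eq-MA-upper-lower} can do it: the paper explicitly notes that combining those Santal\'o inequalities only yields $s^\J\le C^n n!$ with $C>1$, which is far from the sharp $(1+O(1/n))\,n!$. What the paper actually does is introduce the signed measure $\Delta=\nu_2-\lambda_n\mu_2$ on the upper half-plane, prove that its density $m(z)=e^{-1/z}z^{-(n+2)}-\lambda_n e^{-z}$ changes sign at exactly three points $z_1<z_2<z_3$ (Lemma~\ref{lem-3-sign-changes}), and use those three points to build a map $T$ replacing any radial profile by a two-slope piecewise-linear function $T_{a,b,x_0}$ with $\Delta(T(\varphi))\ge\Delta(\varphi)$, with equality only if $\varphi$ already had that form (Lemma~\ref{lem-Maximizers}). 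This three-sign-change mechanism is the essential idea; you do not have a substitute for it.

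Two further points. First, you jump to a one-parameter family $g_n(a)$, but the image of $T$ is genuinely a two-parameter family $T_{a,b,1}$ after scaling, and a separate argument (Lemma~\ref{lem-infinite-h}) is needed to show the maximizer has $b=\infty$: this is done by showing $\partial F/\partial b>0$ at any critical point with $b<\infty$, using the three-root structure of $m$ again to control the sign of $\int_a^\infty(z+b)^{n-1}m(z)\,dz$. Your sketch simply assumes the reduction to a single parameter. Second, the equality characterization — that maximizers in $\cvx_0(\R^n)$ (not merely radial ones) must be $\psi_{K,a}$ — is recovered in the paper via the equality case of the Brunn--Minkowski inequality applied to the level sets, a step your proposal does not address. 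Your asymptotic heuristic ($a=t/n$, Laplace method) points in the right direction but again presupposes the structure of the extremizer rather than deriving it; the paper instead locates $a$ in $[z_1,z_2]$ and estimates the roots of $m_{n!}(z)$ directly (Lemma~\ref{lem-A-Bound}).
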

Note that $\J\psi_{K, a} = \psi_{\frac{1}{a}K, \frac{1}{a}}$ (see e.g.
\cite{AM-Hidden}). We will see in Lemma \ref{lem-Scaling} below, that
$s^\J$ attains the same value on $\psi_{\frac{1}{a}K, \frac{1}{a}}$
and on $\psi_{K, \frac{1}{a}}$, thus Theorem \ref{Thm-maximizers}
implies that
\[
\psi_{K, a}\mbox{ is a maximizer}
\iff
\psi_{K, \frac{1}{a}} \mbox{ is a minimizer.}
\]
Our second theorem provides asymptotically sharp bounds on the
extremal values of $s^\J$.
\begin{thm}\label{Thm-asymp}
There exist universal constants $c, C > 0$ such that for $n$ large
enough:
\[
\left( 1 + \frac{c}{n} \right)n!
\le \max_{\varphi\in \gcf} \left\{s^{\J}(\varphi)\right\} \le
\left( 1 + \frac{C}{n} \right)n!.
\]
\end{thm}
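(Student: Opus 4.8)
The plan is to use Theorem~\ref{Thm-maximizers} to reduce everything to a one‑variable estimate. By that theorem the maximum of $s^{\J}$ over $\gcf$ is attained at $\psi_{B_2^n,a_n}$, and since $\psi_{B_2^n,a}\in\gcf$ for every $a>0$ we get $\max_{\varphi\in\gcf}s^{\J}(\varphi)=\max_{a>0}s^{\J}(\psi_{B_2^n,a})$. So the first step is a closed formula for $s^{\J}(\psi_{K,a})$. Since $\psi_{K,a}$ equals $a\|x\|_{K}$ on $K$ and $+\infty$ off $K$, integrating in polar coordinates adapted to the Minkowski gauge of $K$ gives $\int e^{-\psi_{K,a}}=n\,\vol_n(K)\int_0^1 r^{n-1}e^{-ar}\,dr$; applying this to $\J\psi_{K,a}=\psi_{\frac1a K,\frac1a}$ gives $\int e^{-\J\psi_{K,a}}=n\,a^{-n}\vol_n(K)\int_0^1 r^{n-1}e^{-r/a}\,dr$. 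Both integrals are finite and positive, and after the substitutions $r\mapsto ar$ and $r\mapsto r/a$ their ratio becomes
\[
s^{\J}(\psi_{K,a})=g_n(a):=\frac{a^n\,\gamma(n,1/a)}{\gamma(n,a)},\qquad \gamma(n,t):=\int_0^t u^{n-1}e^{-u}\,du ,
\]
the lower incomplete Gamma function; in particular $s^{\J}(\psi_{K,a})$ is independent of $K$ (as asserted before Theorem~\ref{Thm-maximizers}) and $g_n(a)g_n(1/a)=1$ (reflecting $s^{\J}(\J f)=1/s^{\J}(f)$). Theorem~\ref{Thm-asymp} is thus equivalent to $(1+c/n)\,n!\le\max_{a>0}g_n(a)\le(1+C/n)\,n!$.

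For the lower bound I would evaluate $g_n$ at $a=\tfrac1{2n}$. In the numerator, $\Gamma(n)-\gamma(n,2n)=\int_{2n}^\infty u^{n-1}e^{-u}\,du$ is the upper tail of a sum of $n$ i.i.d.\ standard exponentials, so a Chernoff bound gives $\gamma(n,2n)\ge (n-1)!\bigl(1-e^{-(1-\ln 2)n}\bigr)$. In the denominator, the alternating expansion $\gamma(n,a)=\tfrac{a^n}{n}-\tfrac{a^{n+1}}{n+1}+\tfrac{a^{n+2}}{2(n+2)}-\cdots$, whose terms decrease for $0<a\le 1$, gives $\gamma(n,\tfrac1{2n})<\tfrac{(2n)^{-n}}{n}\bigl(1-\tfrac1{3(n+1)}\bigr)$. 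Dividing,
\[
g_n\!\left(\tfrac1{2n}\right)>\frac{n!\bigl(1-e^{-(1-\ln2)n}\bigr)}{1-\tfrac1{3(n+1)}}\ \ge\ \Bigl(1+\tfrac1{4n}\Bigr)n!
\]
for $n$ large, so $c=\tfrac14$ works. (Taking instead $a=\tfrac{1-\eps_n}{n}$ with $\eps_n\to0$ slowly pushes $c$ up to almost $1$, but this is not needed here.)

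For the upper bound I would show $g_n(a)\le(1+C/n)\,n!$ for \emph{all} $a>0$, splitting the half-line into three pieces. If $a\le\tfrac2n$: bound $\gamma(n,1/a)\le(n-1)!$ and $\gamma(n,a)\ge\tfrac{a^n}{n}\bigl(1-\tfrac{na}{n+1}\bigr)$ (truncating the alternating series), so $g_n(a)\le\dfrac{n!}{1-\tfrac{na}{n+1}}\le\dfrac{n!}{1-\tfrac2{n+1}}=\bigl(1+\tfrac2{n-1}\bigr)n!$. If $\tfrac2n\le a\le1$, write $1/a=\lambda n$ with $\lambda\in[\tfrac1n,\tfrac12]$: the Chernoff bound for the lower tail gives $\gamma(n,\lambda n)\le(n-1)!\,e^{-n(\lambda-1-\ln\lambda)}$, where $\lambda-1-\ln\lambda\ge\ln2-\tfrac12>\tfrac16$ on this range, while $\gamma(n,a)\ge\tfrac{a^n}{n(n+1)}$; hence $g_n(a)\le(n+1)\,n!\,e^{-n/6}\le n!$ for $n$ large. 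If $a\ge1$: the crude bounds $\gamma(n,1/a)\le\tfrac1n(1/a)^n$ and $\gamma(n,a)\ge\gamma(n,1)\ge\tfrac1{en}$ give $g_n(a)\le e$. Taking $C=3$ (say) then works for all $n$ large, which completes the proof.

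The main obstacle is the middle range $\tfrac2n\le a\le1$: there the two ``for free'' estimates $\gamma(n,1/a)\le(n-1)!$ and $\gamma(n,a)\gtrsim\tfrac{a^n}{n}$ only yield $g_n(a)\lesssim n!$, with no room to absorb a multiplicative $1+O(1/n)$ loss, and using just those one ends up with something like a $(1+O(1/\log n))\,n!$ bound. The point is that $\gamma(n,\lambda n)$ is \emph{exponentially} smaller than $(n-1)!$ as soon as $\lambda$ is bounded away from $1$ --- a large‑deviation effect for the Gamma distribution --- which is exactly what makes $a^n\gamma(n,1/a)=(\lambda n)^{-n}\gamma(n,\lambda n)$ negligible there; as $\lambda\uparrow1$, i.e.\ $a\downarrow\tfrac1n$, one hands control back to the Taylor‑series estimate. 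Choosing the break points so that each of the three estimates is simultaneously valid and contributes at most $(1+O(1/n))\,n!$ is where the bookkeeping lies, and the same analysis pushed further recovers both the sharp constant and the asymptotics $\lim_{n}na_n=1$.
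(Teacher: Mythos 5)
Your proposal is correct, and it takes a genuinely different route from the paper in the upper bound. Both proofs start from Theorem~\ref{Thm-maximizers} to reduce to a one-variable problem, and both lower bounds evaluate at $a\approx\frac{1}{2n}$ and use a Chernoff-type estimate (the paper via Lemma~\ref{lem-Gamma-tail}). But your closed formula $g_n(a)=\frac{a^n\gamma(n,1/a)}{\gamma(n,a)}$ is a cleaned-up version of the paper's $G(a)$ (they coincide after the integration-by-parts identity $\gamma(n+1,t)=n\gamma(n,t)-t^n e^{-t}$ applied to numerator and denominator), and, more substantively, your upper bound replaces the paper's variational argument by a direct estimate. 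The paper proves $\lambda_n\le(1+C/n)n!$ by first showing that at a maximizer $G'(a)=0$ forces the critical-point identity \eqref{eq-N2=lambda-N2} $\lambda_n=e^a\gamma(n+1,1/a)\le e^a\,n!$, and then invoking Lemma~\ref{lem-A-Bound} (which rests on the sign-change analysis of $m(z)$ from Lemma~\ref{lem-3-sign-changes}) to control $e^a$. You instead bound $g_n(a)$ uniformly over all $a>0$ via a three-range split, using only the alternating series for $\gamma(n,a)$ near $0$ and a lower-tail Chernoff bound for $\gamma(n,\lambda n)$; your ranges and constants all check out (in particular $\gamma(n,\lambda n)\le(n-1)!\,e^{-nI(\lambda)}$ with $I(\lambda)=\lambda-1-\ln\lambda\ge\ln 2-\tfrac12>\tfrac16$ on $\lambda\in[\tfrac1n,\tfrac12]$, and $\gamma(n,a)\ge\tfrac{a^n}{n(n+1)}$ for $a\le1$). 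The trade-off: the paper's route is shorter once its critical-point machinery is in place and yields the asymptotics $na_n\to1$ as a by-product, whereas your argument is more elementary and self-contained, needing neither the localization of the maximizer nor the structure of $m$, at the cost of a case analysis. Both are valid proofs of Theorem~\ref{Thm-asymp}.
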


%
%
%
%
%

\noindent {\bf Acknowledgements:}
The authors would like to thank Fedor Nazarov and Shiri
Artstein-Avidan for useful discussions. The first named author was
partially supported by the U.S. National Science Foundation Grant
DMS-1101636, and also partially supported by the AMS-Simons Travel
Grant, which is administered by the American Mathematical Society
with support from the Simons Foundation.

\section{Preliminaries - basic properties of 
$s^{\J}$}
In this section we analyze the action of $\J$ using properties of the
point map which induces it, and prove that $\J$ commutes with the
action of symmetrizing a function. This allows us to apply the known
bounds for $s^\L$ and $s^\A$ and show that $s^\J$ is bounded (with a
non optimal constant). More importantly, it sets the
groundwork to reducing the problem of finding optimal bounds  for
$s^\J$ (and extremal functions), to a certain two dimensional
optimization problem.
%
We begin by considering the function
\mbox{$F:\R^n\times \R^+ \to \R^n \times \R^+$} given by
\[
F(x,z) = \left( \frac{x}{z}, \frac{1}{z} \right).
\]
The map $F$ is an involution that induces the $\J$ transform (see
\cite{AM-Hidden}), in the following sense. Let $\epi(\phi) =
\left\{ (x, z)\in \R^n\times\R^+ \,:\, \phi(x) < z \right\}$ denote
the epi-graph of a function $\phi$. Then for any $\varphi \in \gcf$
we have:
\[
\epij=F(\epif),
\]
For any $z\ge 0$, let $H_z=\left\{
(x,z)\,|\, x\in\R^n\right\} \subset\R^{n+1}$ denote the hyperplane at
height $z$, and let $L_z(\varphi)=\left\{ x\in\R^n\,|\, (x,z)\in
\epiff\right\} \subseteq \R^n$ denote the corresponding slice of
$\epiff$. Note that the map $F$, restricted to $H_z$, is simply a
dilation by $\frac{1}{z}$. Consequently, we have the following simple
relation between level sets of $\varphi$ and $\Jf$:
%
\begin{equation}\label{Eq-J-dilation}
L_{\frac{1}{z}}(\Jf) = \frac{1}{z} L_z(\varphi).
\end{equation}
For a unit vector $u\in S^n$ which is perpendicular to the $z$ axis,
we define the operator $S_u:\gcf \to \gcf$ by setting
$\epi (S_u\varphi)$ to be the Steiner symmetrization of $\epiff$ with
respect to the direction $u$ (note that indeed $S_u\varphi\in \gcf$).
The following proposition is a direct consequence of
\eqref{Eq-J-dilation}.
\begin{prop}\label{prop-JSu=SuJ}
$S_u$ commutes with $\J$. That is, 
\begin{equation}\label{Eq-Steiner-commutes}
S_u (\Jf) = \J (S_u \varphi).
\end{equation}
\end{prop}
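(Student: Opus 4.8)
The plan is to reduce the identity to two elementary and well known properties of Steiner symmetrization, and then combine them with the dilation relation \eqref{Eq-J-dilation}. The two ingredients are: \emph{(a)} since $u$ is perpendicular to the $z$-axis, $S_u$ acts on $\epiff$ slice by slice, i.e.
\[
L_z(S_u\varphi)=S_u^{(n)}\bigl(L_z(\varphi)\bigr)\qquad(z>0),
\]
where $S_u^{(n)}$ denotes the $n$-dimensional Steiner symmetrization in the direction $u$, acting on convex subsets of $\R^n$; and \emph{(b)} $S_u^{(n)}$ is equivariant under homotheties whose center lies on the symmetrization hyperplane $u^\perp$, so that in particular $S_u^{(n)}(\lambda A)=\lambda\,S_u^{(n)}(A)$ for every $\lambda>0$ and every convex $A\subseteq\R^n$ (scaling by $\lambda$ multiplies every chord length, and the position on $u^\perp$ of every chord midpoint, by $\lambda$).

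For \emph{(a)} I would argue as follows: a line parallel to $u$ is contained in a single hyperplane $H_z$, so the segment it cuts off $\epiff$ is exactly the segment the corresponding line in $\R^n$ cuts off $L_z(\varphi)$; replacing all these segments by centered segments of equal length therefore carries each slice $L_z(\varphi)$ to $S_u^{(n)}(L_z(\varphi))$, and carries $\epiff$ to $\bigcup_{z>0}\bigl(S_u^{(n)}(L_z(\varphi))\times\{z\}\bigr)$. Since this set is, as was noted when $S_u$ was defined, the epigraph of $S_u\varphi\in\gcf$, comparing slices yields \emph{(a)}. (One checks along the way that the monotonicity of $S_u^{(n)}$ under inclusion keeps the slices nested, so the symmetrized set really is an epigraph; this is precisely what that remark asserts.)

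Granting \emph{(a)} and \emph{(b)}, the proof is a short chain. Fix $w>0$ and write $w=\tfrac1z$. Applying \eqref{Eq-J-dilation} to $S_u\varphi$, then \emph{(a)}, then \emph{(b)}, then \eqref{Eq-J-dilation} to $\varphi$, and then \emph{(a)} to $\J\varphi$, we obtain
\[
L_w\bigl(\J(S_u\varphi)\bigr)=\tfrac1z\,L_z(S_u\varphi)=\tfrac1z\,S_u^{(n)}\bigl(L_z(\varphi)\bigr)=S_u^{(n)}\!\left(\tfrac1z\,L_z(\varphi)\right)=S_u^{(n)}\bigl(L_w(\J\varphi)\bigr)=L_w\bigl(S_u(\J\varphi)\bigr).
\]
Since a function in $\gcf$ is determined by its sublevel sets $\{L_w\}_{w>0}$, this gives $\J(S_u\varphi)=S_u(\J\varphi)$. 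Conceptually, the same computation says that the point map $F$, which restricts on each $H_z$ to a homothety, commutes with $S_u$ as a transformation of subsets of $\R^n\times\R^+$ by virtue of \emph{(a)} and \emph{(b)}, whence the induced maps $\J$ and $S_u$ on $\gcf$ commute.

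I do not expect a genuine obstacle here: \emph{(a)} and \emph{(b)} are standard, and the only mild subtleties are bookkeeping — identifying the slices of a symmetrized epigraph when a chord is degenerate or unbounded, and noting that the homothety appearing in \emph{(b)} is centered at the origin, which automatically lies on the linear subspace $u^\perp$. The real content is the two invocations of \eqref{Eq-J-dilation} in the displayed chain.
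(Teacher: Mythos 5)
Your proof is correct and takes essentially the same approach the paper intends: the paper states the proposition as a "direct consequence" of \eqref{Eq-J-dilation}, and your chain of equalities on level sets, using the slice-by-slice action of $S_u$ and its equivariance under dilations centered on $u^\perp$, is exactly the intended argument.
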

Consider the measures $\mu, \nu$ on $\R^n\times\R^+$ with densities
$e^{-z}$, $e^{-\frac{1}{z}}z^{-(n+2)}$ respectively.
It was shown in \cite{AFS} that for any $\varphi\in \gcf$, 
\begin{equation}\label{Eq-integrals-by-munu}
\mu(\epif) = \int_{\R^n}e^{-\varphi},\qquad
\nu(\epif) = \int_{\R^n}e^{-\Jf}.
\end{equation}
Since the densities of $\mu$ and $\nu$ only depend on $z$ (and not on
$x$), and the volumes of 
$L_z(S_u\varphi)$ and $L_z(\varphi)$ are equal, we have
\begin{equation}\label{Eq-Steiner-preserves-volumes}
\int_{\R^n}e^{-\varphi} = \int_{\R^n}e^{-S_u\varphi},\qquad
\int_{\R^n}e^{-\Jf} = \int_{\R^n}e^{-\J S_u\varphi}.
\end{equation}
Applying $n$ successive Steiner symmetrizations in directions
$e_1,\dots,e_n$ results in an unconditional function, i.e. a function
satisfying $\eta(\pm x_1,\dots,\pm x_n) = \eta(x_1,\dots,x_n)$. In
particular $\eta$ is even, so we can easily conclude that the ratio
$s^{\J}$ is bounded above and below. Recall that $\J = \L \circ \A$,
so we may write:
\[
s^\J(\eta) =
\frac{\int e^{-\J \eta}}{\int e^{-\eta}}
\frac{\int e^{-\A \eta}}{\int e^{-\A \eta}} = 
\frac{\int e^{-\L (\A \eta)} \int e^{-\A \eta}}{\int e^{-\A \eta}\int e^{-\eta}} =
\frac{s^\L(\A \eta)}{s^\A(\eta)} =
\frac{s^\A(\L \eta)}{s^\L(\eta)} .
\]
This implies
\[
\max\left\{
\frac{\min\{s^\L\}}{\max\{s^\A\}},\,
\frac{\min\{s^\A\}}{\max\{s^\L\}}
\right\}\le
s^\J(\eta)\le
\min\left\{
\frac{\max\{s^\L\}}{\min\{s^\A\}},\,
\frac{\max\{s^\A\}}{\min\{s^\L\}}
\right\}.
\]
Using the bounds \eqref{eq-ML-upper}, \eqref{eq-ML-lower}, and
\eqref{eq-MA-upper-lower} yields
\[
s^\J(\eta)\le
C^n n!
\]
for some $C>1$. However, these bounds are far from optimal. In fact,
we shall see that the maximal value of $s^\J$ is not much larger than
$n! = s^\J(1_K^\infty)$. To summarize, we have shown the following.
\begin{rem}\label{rem-MJ-bounded}
Denoting
\[
\lambda_n = \sup_{\varphi\in \cvx_0(\R^n)} s^\J (\varphi),
\]
we have
\begin{equation}\label{eq-MJ-prelim-bounds}
n! \le \lambda_n \le C^n n!
\end{equation}
\end{rem}
%
Another important property of the $\J$ transform is that its volume
is preserved under rearrangement. For a convex function
$\varphi \in \gcf$ we define the symmetric rearrangement of
$\varphi$ to be the function $\varphi^\ast:\R^n\to\R^+$ satisfying
that $L_z(\varphi^\ast)$ is the Schwartz symmetrization of
$L_z(\varphi)$, i.e. a centered Euclidean ball with the same volume.
\begin{eqnarray*}
	\int_{\R^n}e^{-\varphi}
	&=&
	\int_0^\infty \vol_n \left(x: e^{-\varphi}\ge t\right) dt =
	\int_{-\infty}^\infty \vol_n \left(x : e^{-\varphi} \ge e^{-z}\right) e^{-z}dz \\ \\ 
	&=&
	\int_0^\infty \vol_n \left(x : \varphi \le z\right) e^{-z} dz =
	\int_0^\infty \vol_n\left(L_z(\varphi)\right)  e^{-z}dz.
\end{eqnarray*}
A similar formula holds for the volume of $\J \varphi$, as a weighted
integral of the $n$-dimensional volumes of its level sets.
\begin{lem} \label{lem-Steiner-J}
Let $\varphi \in \gcf$. Then
\[
\int_{\R^n} e^{-\J \varphi} =
\int_0^\infty \vol_n\left(L_z(\varphi)\right)e^{-\frac1z}z^{-(n+2)}dz.
\]
\end{lem}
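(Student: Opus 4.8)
The plan is to mimic the layer-cake computation displayed just above the lemma, applied to $\J\varphi$ in place of $\varphi$, and then to transport the resulting integral over the level sets of $\J\varphi$ to an integral over the level sets of $\varphi$ using the dilation identity \eqref{Eq-J-dilation}.

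First I would record that $\J$ maps $\gcf$ into itself, so $\J\varphi\in\gcf$ and the same chain of equalities written for $\int e^{-\varphi}$ gives
\[
\int_{\R^n} e^{-\J\varphi} = \int_0^\infty \vol_n\left(L_w(\J\varphi)\right) e^{-w}\, dw.
\]
Next, in this integral I would substitute $w = \frac1z$, so that $dw = -z^{-2}\,dz$ and the limits $w:0\to\infty$ become $z:\infty\to 0$; after reversing orientation this yields
\[
\int_{\R^n} e^{-\J\varphi} = \int_0^\infty \vol_n\left(L_{1/z}(\J\varphi)\right) e^{-\frac1z}\, z^{-2}\, dz.
\]
Finally I would invoke \eqref{Eq-J-dilation}, namely $L_{1/z}(\J\varphi) = \frac1z L_z(\varphi)$, which gives $\vol_n\left(L_{1/z}(\J\varphi)\right) = z^{-n}\vol_n\left(L_z(\varphi)\right)$; substituting this produces exactly the claimed weight $e^{-\frac1z}z^{-(n+2)}$.

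Alternatively, and even more directly, the statement can be read off \eqref{Eq-integrals-by-munu}: by definition $\int e^{-\J\varphi} = \nu(\epif) = \int_{\R^n\times\R^+} 1_{\epif}(x,z)\, e^{-\frac1z}z^{-(n+2)}\,dx\,dz$, and since the density depends only on $z$, integrating first in the $x$ variable replaces $\int 1_{\epif}(x,z)\,dx$ by $\vol_n\left(L_z(\varphi)\right)$, which is precisely the desired formula.

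The argument is routine; the only points meriting a word of care are the measurability of $z\mapsto\vol_n\left(L_z(\varphi)\right)$ (immediate, as the slices are convex and monotone in $z$) and the legitimacy of the change of variables and of interchanging the order of integration, both justified by nonnegativity of the integrands via Tonelli. I do not expect any substantive obstacle.
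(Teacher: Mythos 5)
Your proposal is correct, and in fact contains the paper's own proof verbatim as your ``alternative'' route: the paper simply invokes \eqref{Eq-integrals-by-munu} to write $\int e^{-\J\varphi} = \nu(\epi\varphi) = \int_{\epi\varphi} e^{-1/z}z^{-(n+2)}\,dz\,dx$ and then applies Tonelli to integrate first in $x$, producing the slice volumes. Your primary route (layer-cake for $\J\varphi$ with weight $e^{-w}$, change of variables $w=1/z$, then the dilation identity \eqref{Eq-J-dilation}) is a genuinely different, slightly more elementary derivation: it does not invoke the formula $\nu(\epi\varphi)=\int e^{-\J\varphi}$ imported from \cite{AFS}, but instead re-derives it from first principles using only the layer-cake identity $\int e^{-\phi} = \int_0^\infty \vol_n(L_w(\phi))e^{-w}\,dw$ (already displayed in the paper for $\varphi$) together with the level-set relation $L_{1/z}(\J\varphi)=\tfrac1z L_z(\varphi)$. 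The trade-off is self-containedness versus brevity: your first argument is self-contained modulo \eqref{Eq-J-dilation}, whereas the paper's one-line proof leans on the externally cited $\nu$-identity. Both are sound; the measurability and Tonelli points you flag are indeed the only technical hygiene needed and are handled exactly as you say.
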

\begin{proof}
We use \eqref{Eq-integrals-by-munu} to get
\begin{eqnarray*}
\int_{\R^n} e^{-\J \varphi}
&=&
\int_{\epi(\varphi)}e^{-\frac1z}z^{-(n+2)}dzdx =
\int_0^\infty \vol_n\left(L_z(\varphi)\right) e^{-\frac1z}z^{-(n+2)}dz.
\end{eqnarray*}
\end{proof}
From Lemma \ref{lem-Steiner-J} we conclude that the functional
$s^{\J}(\varphi)$ depends only on the volumes of level sets of
$\varphi$. Thus, by replacing the level sets of $\varphi$ with balls,
we conclude that it is enough to maximize $s^{\J}$ over spherically
symmetric functions. Given a spherically symmetric function
$\varphi:\R^n \to \R^+$, let $\psi = \varphi|_{\R^+}$ denote its
restriction to a ray. Since $\varphi(x)$ depends only on $|x|$ we
have:
\[
\vol(\J \varphi) =
\int e^{-\J \varphi} =
\int_{\R^n}dx\int_{\varphi(x)}^{\infty}e^{-\frac1z}z^{-(n+2)}dz =
n\kappa_n \int_0^\infty r^{n-1}dr\int_{\psi(r)}^{\infty}e^{-\frac1z}z^{-(n+2)}dz.
\]
Thus we define the $2$-dimensional measure $\nu_2$ on the first quadrant by
\[
d\nu_2=n\kappa_n r^{n-1}e^{-\frac1z}z^{-(n+2)}dzdr.
\]
We have seen that $\vol(\J \varphi) = \nu(\epi(\varphi)) =
\nu_2(\epi(\psi))$.
Similarly, defining
\[
d\mu_2=n\kappa_n r^{n-1}e^{-z}dzdr,
\]
yields $\vol(\varphi) = \mu(\epi(\varphi)) = \mu_2(\epi(\psi)$.
Therefore we define
\begin{equation}\label{eq-Def-Mn}
s^\J_n(\psi) =
\frac{\nu_2(\epi(\psi))}{\mu_2(\epi(\psi))} = s^\J(\varphi),
\end{equation}
and conclude that 
\begin{equation}\label{Eq-2D-sup}
\lambda_n = 
\sup_{\varphi\in\cvx_0(\R^n)} s^{\J}(\varphi) \,=
\sup_{\psi\in\cvx_0(\R^+)} s^\J_n(\psi).
\end{equation}
Another useful property of the ratio $s^{\J}$ is its invariance under
rescaling. Namely, defining $\varphi_a\in \cvx_0(\R^n)\,$ by
$\,\varphi_a(x) := \varphi(ax)$, we have:
\begin{lem} \label{lem-Scaling}
Let $\varphi \in \gcf$ and $a>0$. Then, $s^{\J}(\varphi_a) = s^{\J}(\varphi)$.
\end{lem}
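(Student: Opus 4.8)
The plan is to use the integral representations established just above, together with the level-set dilation formula \eqref{Eq-J-dilation}, to track how rescaling the argument affects each of the two integrals $\int e^{-\varphi}$ and $\int e^{-\J\varphi}$ separately, and observe that both pick up the same factor. First I would note that rescaling the function commutes with taking level sets: since $\varphi_a(x) = \varphi(ax)$, we have $\varphi_a(x) \le z \iff \varphi(ax) \le z \iff ax \in L_z(\varphi)$, so that $L_z(\varphi_a) = \frac{1}{a} L_z(\varphi)$ for every $z > 0$, and therefore $\vol_n(L_z(\varphi_a)) = a^{-n}\vol_n(L_z(\varphi))$.

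Next I would plug this into the two formulas we already have. Using the layer-cake identity displayed before Lemma \ref{lem-Steiner-J},
\[
\int_{\R^n} e^{-\varphi_a} = \int_0^\infty \vol_n(L_z(\varphi_a))\, e^{-z}\, dz = a^{-n}\int_0^\infty \vol_n(L_z(\varphi))\, e^{-z}\, dz = a^{-n}\int_{\R^n} e^{-\varphi}.
\]
Likewise, by Lemma \ref{lem-Steiner-J},
\[
\int_{\R^n} e^{-\J\varphi_a} = \int_0^\infty \vol_n(L_z(\varphi_a))\, e^{-1/z} z^{-(n+2)}\, dz = a^{-n}\int_0^\infty \vol_n(L_z(\varphi))\, e^{-1/z} z^{-(n+2)}\, dz = a^{-n}\int_{\R^n} e^{-\J\varphi}.
\]
Taking the ratio, the factors $a^{-n}$ cancel, giving $s^\J(\varphi_a) = s^\J(\varphi)$. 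One should also handle the degenerate cases where the integrals are $0$ or $\infty$: since multiplication by $a^{-n} > 0$ preserves the properties of being zero, finite-positive, or infinite, the convention $\tfrac00 = \tfrac\infty\infty = 1$ is respected and the identity holds there too.

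There is no serious obstacle here; the only thing to be careful about is which formula encodes which integral and making sure the same scaling factor genuinely appears in both — the mild subtlety being that while $\J$ itself interacts with dilations in a more intricate way (it sends height-$z$ slices to height-$1/z$ slices, as in \eqref{Eq-J-dilation}), Lemma \ref{lem-Steiner-J} has already absorbed that interaction into a clean weighted-volume formula in terms of the level sets of $\varphi$ alone, so the argument reduces to the single observation about $L_z(\varphi_a)$. An alternative, essentially equivalent route is to argue at the level of epigraphs: rescaling corresponds to a volume-preserving-in-direction linear map on $\epi(\varphi)$ composed with an $x$-dilation, and then invoke \eqref{Eq-integrals-by-munu}; but the level-set computation above is the cleanest.
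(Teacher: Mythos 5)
Your argument is correct and is essentially identical to the paper's: both derive $\vol_n(L_z(\varphi_a)) = a^{-n}\vol_n(L_z(\varphi))$ from the level-set relation and then feed this into the layer-cake identity and Lemma \ref{lem-Steiner-J} to extract the common factor $a^{-n}$ from numerator and denominator. Your extra remark about the degenerate cases ($0$ or $\infty$) is a small but welcome bit of care that the paper leaves implicit.
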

\begin{proof}
Since $L_z(\varphi_a) = \frac1a L_z(\varphi)$, we have
\[
\vol_n(L_z(\varphi_a)) = \frac{1}{a^n} \vol_n(L_z(\varphi)).
\]
By Lemma \ref{lem-Steiner-J}, we get
$\vol\left(\J \varphi_a\right) =
\frac{1}{a^n} \vol\left(\J \varphi\right)$. Similarly,
$\vol\left(\varphi_a\right) =
\frac{1}{a^n} \vol\left(\varphi\right)$, and the proof follows.
\end{proof}

\section{Characterization of maximizers of $s^\J$}
We know by \eqref{eq-MJ-prelim-bounds} that $s^{\J}$ is bounded from
above and below. In this section we prove Theorem \ref{Thm-maximizers}
by showing that $s^\J$ attains its maximum (which implies that its
minimum is also attained). The maximizer is found
by reducing the problem to a two dimensional optimization problem,
then using the scaling invariance established in Lemma
\ref{lem-Scaling}, thus restricting to the class
$\cvx_{0,z}(\R^+) \subset \cvx_0(\R^+)$ defined as follows, for
$z > 0$.
\[
\cvx_{0,z}(\R^+) =
\left\{\varphi\in \cvx_0(\R^+)\,:\, (1,z)\in \partial \epi(\varphi) \right\},
\]
or equivalently, $\varphi\in\cvx_{0,z}(\R^+) \iff L_z(\varphi) = [0,1]$. By
Lemma \ref{lem-Scaling} we have
\begin{equation}\label{Eq-sup-on-cvxz}
\lambda_n \quad=
\sup_{\varphi\in \cvx_0(\R^+)}s^\J_n(\varphi)\quad =
\sup_{\varphi\in\cvx_{0,z}(\R^+)} s^\J_n(\varphi).
\end{equation}
Moreover, any $\varphi\in\cvx_{0,z}(\R^+)$ satisfies
\[
\hat{\min}\left\{ 1^\infty_{[0,1]}, l_z \right\} \le
\varphi \le
\max\left\{1^\infty_{[0,1]}, l_z\right\},
\]
where $l_z$ is a linear function with slope $z$, and
$\hat{\min}(\eta, \xi)$ is defined to be the largest convex
function smaller than $\min(\eta, \xi)$. This implies the existence
of positive constants $c=c(n,z)$, and $C=C(n,z)$ such that
\begin{equation}\label{Eq-bounds-on-cvxz}
c(n,z) \le
\mu_2(\epi(\varphi)) \le
C(n,z).
\end{equation}
We define a signed measure $\Delta$ on $\R^+\times \R^+$ by
\begin{equation}
\Delta = \nu_2 -\lambda_n \mu_2.
\end{equation}
We get, for any $\varphi\in \cvx_{0}(\R^+)$ that
\begin{equation}\label{eq-Delta-negative}
\Delta(\epi(\varphi)) =
\nu_2(\epi(\varphi)) - \lambda_n \mu_2(\epi(\varphi)) \le
0.
\end{equation}
\begin{rem}\label{rem-same-maximizers}
Clearly, $\Delta(\epi(\varphi)) = 0$ if and only if
$s^\J_n(\varphi) = \lambda_n$, ~i.e. if and only if
$\varphi\in\cvx_0(\R^+)$ is a maximizer of $s^\J_n$.
\end{rem}
The density of the signed measure $\Delta$ is given by
\[
d\Delta =
n\kappa_nr^{n-1}\left(e^{-\frac1z}z^{-(n+2)}-\lambda_n e^{-z}\right)dzdr =
n\kappa_nr^{n-1}m(z)dzdr,
\]
where $m(z) = e^{-\frac1z}z^{-(n+2)}-\lambda_n e^{-z}$.
\begin{lem}\label{lem-3-sign-changes}
The function $m:\R^+\to\R$ changes sign at three points
$z_1 < z_2 < z_3$.
\end{lem}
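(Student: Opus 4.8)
The plan is to convert the sign condition into a statement about a single scalar function and its two critical points. Since $e^{-z}>0$ we may write $m(z)=e^{-z}\bigl(g(z)-\lambda_n\bigr)$ with $g(z)=e^{\,z-1/z}z^{-(n+2)}$, so $m$ changes sign precisely where the graph of $g$ crosses the horizontal level $\lambda_n$, with the same multiplicities. Now $(\log g)'(z)=1+z^{-2}-(n+2)z^{-1}=z^{-2}\bigl(z^2-(n+2)z+1\bigr)$, and the quadratic has two positive roots $\alpha<\beta$ with $\alpha\beta=1$ and $\alpha+\beta=n+2$ (hence $0<\alpha<1<\beta$). Thus $g$ is strictly increasing on $(0,\alpha]$, strictly decreasing on $[\alpha,\beta]$, strictly increasing on $[\beta,\infty)$, with $g(0^+)=0$ and $g(z)\to\infty$. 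For a positive level $v$, this ``increase--decrease--increase'' profile meets $v$ transversally at exactly three points, one in each of $(0,\alpha)$, $(\alpha,\beta)$, $(\beta,\infty)$, exactly when $g(\beta)<v<g(\alpha)$, and at strictly fewer points (a single crossing, possibly with a tangency) otherwise. So the lemma reduces to the two-sided bound $g(\beta)<\lambda_n<g(\alpha)$.

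The lower bound $g(\beta)<\lambda_n$ is elementary. Using $n+2=\beta+\beta^{-1}$ one has $\log g(\beta)=(\beta-\beta^{-1})-(\beta+\beta^{-1})\log\beta$, so $\log g(\beta)<0$ is equivalent to $\frac{\beta^{2}-1}{\beta^{2}+1}<\log\beta$; putting $\beta=e^{t}$ with $t=\log\beta>0$ this reads $\tanh t<t$, which holds. Hence $g(\beta)<1\le\lambda_n$ by Remark~\ref{rem-MJ-bounded}.

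The upper bound $\lambda_n<g(\alpha)$ is the crux. By the reduction \eqref{Eq-2D-sup}, \eqref{eq-Def-Mn} and Lemma~\ref{lem-Steiner-J} it suffices to bound $s^{\J}_n(\psi)$ uniformly over $\psi\in\cvx_0(\R^+)$. Writing $V(z)=\kappa_n\rho(z)^{n}$, where $\rho(z)=\sup L_z(\psi)$, one has $\mu_2(\epi\psi)=\int_0^\infty V(z)e^{-z}dz$, $\nu_2(\epi\psi)=\int_0^\infty V(z)g(z)e^{-z}dz$, and $\rho$ -- being the generalized inverse of the nonnegative nondecreasing convex function $\psi$ -- is nonnegative, nondecreasing and concave; in particular $z\mapsto\rho(z)/z$ is nonincreasing, so $\rho(z)^{n}\le\bigl(\tfrac{z}{n+1}\bigr)^{n}\rho(n+1)^{n}$ for $z\ge n+1$. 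Since $\alpha<n+1<\beta$ (the right inequality amounts to $4n>0$), I split at $z=n+1$: on $(0,n+1)$ monotonicity of $g$ gives $g\le g(\alpha)$, hence $\int_0^{n+1}Vge^{-z}\le g(\alpha)\int_0^{n+1}Ve^{-z}$; on $[n+1,\infty)$ the bound on $\rho$ together with $\int_{n+1}^\infty z^{-2}e^{-1/z}dz=1-e^{-1/(n+1)}\le\tfrac1{n+1}$ gives $\int_{n+1}^\infty Vge^{-z}\le (n+1)^{-(n+1)}V(n+1)$, which $V(n+1)e^{-(n+1)}\le\int_{n+1}^\infty Ve^{-z}$ upgrades to $\bigl(\tfrac{e}{n+1}\bigr)^{n+1}\int_{n+1}^\infty Ve^{-z}$. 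The same chain $\int_0^{n+1}Ve^{-z}\le V(n+1)\le e^{n+1}\int_{n+1}^\infty Ve^{-z}$ forces $\theta:=\int_0^{n+1}Ve^{-z}\big/\int_0^\infty Ve^{-z}\le\frac{e^{n+1}}{e^{n+1}+1}$. Collecting the pieces,
\[
s^{\J}_n(\psi)\ \le\ g(\alpha)\,\theta+\Bigl(\tfrac{e}{n+1}\Bigr)^{n+1}(1-\theta),
\]
which is increasing in $\theta$ because $\bigl(\tfrac{e}{n+1}\bigr)^{n+1}<g(\alpha)=e^{\alpha-\beta}\beta^{n+2}$ (clear for $n\ge 2$; at $n=1$ this is the numerical check $1.847\ldots<1.917\ldots$). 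Evaluating at $\theta=\frac{e^{n+1}}{e^{n+1}+1}$ and using the same inequality once more gives $s^{\J}_n(\psi)<g(\alpha)$; taking the supremum over $\psi$ yields $\lambda_n<g(\alpha)$.

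I expect the third step to be the only real difficulty: the bound $\lambda_n\le C^{n}n!$ from Remark~\ref{rem-MJ-bounded} is exponentially too weak for $\lambda_n<g(\alpha)$ (one needs roughly $\lambda_n\lesssim n^{2}n!$), so the concavity of the level-radius function $\rho$ must be exploited, and the margin is so thin at $n=1$ that the weighted-average refinement via $\theta$ is genuinely needed rather than a plain ``$\lambda_n\le g(\alpha)+\text{error}$'' estimate. Once the sandwich $g(\beta)<\lambda_n<g(\alpha)$ is in place, the three transversal crossings of $g$ with the level $\lambda_n$ are exactly the three points $z_1<z_2<z_3$ at which $m$ changes sign, completing the proof.
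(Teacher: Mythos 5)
Your proof is correct, and it diverges from the paper's at precisely the step you flag as ``the crux.'' Both arguments begin the same way: rewrite the root equation for $m$ so that the number of sign changes is governed by a single auxiliary function that increases, then decreases, then increases. You use $g(z)=e^{z-1/z}z^{-(n+2)}$, so $m=e^{-z}(g-\lambda_n)$; the paper uses $f(z)=e^{(z-1/z)/(n+2)}-\lambda_n^{1/(n+2)}z$, which is just $z\bigl(g(z)^{1/(n+2)}-\lambda_n^{1/(n+2)}\bigr)$. In both cases the second-derivative (or log-derivative) analysis forces the number of sign changes to be either one or three. Where you part ways with the paper is in \emph{how} the one-root case is excluded. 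The paper's route is soft and indirect: assuming a single sign change at $z_0$, it compares $\Delta(\epi\varphi)$ with $\Delta(\epi\,l)$ for the linear barrier $l(x)=z_0 x$; the ``increase--decrease'' sign pattern and convexity force $\Delta(\epi\varphi)<\Delta(\epi\,l)=\frac{\kappa_n}{z_0^n}(1-n!\lambda_n)<0$ uniformly, which contradicts the supremum definition of $\lambda_n$ via \eqref{Eq-sup-on-cvxz}--\eqref{Eq-bounds-on-cvxz}. The only quantitative input the paper needs there is the trivial $\lambda_n>1/n!$. Your route is hard and direct: you recast the one-root exclusion as the double inequality $g(\beta)<\lambda_n<g(\alpha)$, prove the left half elementarily from $\tanh t<t$ together with $\lambda_n\ge n!$, and then prove the right half by exploiting the concavity of the level-radius $\rho(z)=\sup L_z(\psi)$ to obtain a genuinely new a-priori bound on $s^\J_n$: splitting the $\mu_2$-- and $\nu_2$--integrals at $z=n+1\in(\alpha,\beta)$, controlling $g\le g(\alpha)$ below the split and $\rho(z)\le\frac{z}{n+1}\rho(n+1)$ above it, and then optimizing over the mass fraction $\theta\le e^{n+1}/(e^{n+1}+1)$. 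Your claim that the crude bound $\lambda_n\le C^n n!$ of Remark \ref{rem-MJ-bounded} is exponentially too weak for this is right (since $g(\alpha)\sim e^{-(n+2)}(n+2)^{n+2}\approx (n+2)!/\sqrt{2\pi(n+2)}$), so the concavity argument really is needed, and your observation that the margin is razor-thin at $n=1$ (numerical $1.847\ldots<1.917\ldots$) is accurate. The computations check out, including $\alpha<n+1<\beta$, the tail estimates $\int_{n+1}^\infty e^{-1/z}z^{-2}dz\le\tfrac1{n+1}$ and $V(n+1)\le e^{n+1}\int_{n+1}^\infty Ve^{-z}$, the monotonicity-in-$\theta$ criterion $(e/(n+1))^{n+1}<g(\alpha)$, and the treatment of degenerate $\psi$ via the convention $s^\J=1<g(\alpha)$. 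What each approach buys: the paper's argument is shorter and avoids any delicate estimate, at the cost of being non-constructive (it gives no new bound on $\lambda_n$); yours is longer and needs a small-$n$ numerical check, but yields the explicit bound $\lambda_n\le\bigl(g(\alpha)e^{n+1}+(e/(n+1))^{n+1}\bigr)/(e^{n+1}+1)$, of the right order $n^{3/2}\,n!$, which is a nontrivial strengthening of Remark \ref{rem-MJ-bounded} already at this stage of the paper.
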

\begin{proof}
Consider the function
$f(z) =
e^{\frac{1}{n+2}\left(z-\frac{1}{z}\right)}
- \lambda_n^{\frac{1}{n+2}}z$. Since
%
\[
m(z)=0 \iff
e^{-\frac1z}z^{-(n+2)} = \lambda_n e^{-z} \iff
e^{\frac{1}{n+2}\left(z-\frac{1}{z}\right)} =
\lambda_n^{\frac{1}{n+2}}z,
\]
we conclude that $m$ and $f$ have the same roots. Since
\[
f''(z) =
\frac{e^{\frac{\left(z-\frac{1}{z}\right)}{n+2}}}{z^4(n+2)^2}\cdot
\left(
\left(z^2 + 1\right)^2
-2(n+2)z
\right),
\]
we get
\[
f''(z)=0 \iff z^4 + 2z^2 + 1 = 2(n+2)z.
\]
Since $z\mapsto z^4+2z^2+1$ is strictly convex and $z\mapsto 2(n+2)z$
is linear, $f''(z)$ has at most two roots, which implies that $f$ has
at most four roots. Moreover, $m(0^+) = -\lambda_n$ is negative and
$m(\infty) = 0^+$, which implies that one of the two following holds.
\begin{enumerate}
	\item There exists $z_0 > 0$ such that $m \le 0$ on $[0, z_0]$ and
	$0\le m$ on $[z_0, \infty)$.
	\item There exist $0 < z_1 < z_2 < z_3$ such that $m \le 0$ on
	$[0,z_1]\cup[z_2,z_3]$ and $0\le m$ on
	$[z_1, z_2] \cup [z_3,\infty)$.
\end{enumerate}
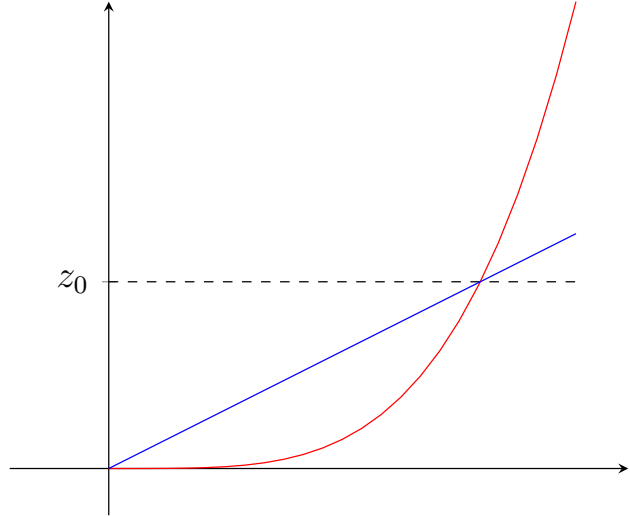
\begin{figure}[h]
	\centering
	\pgfplotsset{my style/.append style={axis x line=middle, axis y line=
			middle, axis equal }}
	\begin{tikzpicture}[scale=1.2]
	\begin{axis}[my style, xmin=-1, ymin = -1, xmax = 10, ymax = 10, xtick={0}, xticklabels={0}, ytick={4}, yticklabels={$z_0$}]
	\addplot [color=black, domain=0:10, dashed] {4};
	\addplot [color=red, domain=0:10] {0.001*x^4};
	\addplot [color=blue, domain=0:10] {4/7.952*x};
	\end{axis}
	\end{tikzpicture}
	\caption{The case where $m$ changes sign only at $z_0$.} \label{fig:Figure1}
\end{figure}
Next, we shall exclude the first case. Consider the function
$l(x) = z_0 x$ defined on $\R^+$, and for any $\varphi \in
\cvx_{0, z_0}(\R^+)$, define the sets:
\[
X = \epi(\varphi)\cap\epi(l),\qquad
Y = \epi(\varphi)\setminus\epi(l),\qquad
Z = \epi(l)\setminus\epi(\varphi).
\]
By convexity, $Y\subset \left\{ (x,z)\, :\, z\in[0, z_0] \right\}$,
we have $\Delta(Y) < 0$. Similarly, $\Delta(Z) > 0$. Therefore
\begin{eqnarray*}
\Delta(\epi(\varphi)) &=&
\Delta(X) + \Delta (Y) <
\Delta(X) < 
\Delta(X) + \Delta (Z) = 
\Delta(\epi(l)) = \\ &=&
\nu \left(\epi\left(\|\cdot\|_{\frac{1}{z_0}B}\right)\right) - \lambda_n
\mu \left(\epi\left(\|\cdot\|_{\frac{1}{z_0}B}\right)\right) =
\frac{\kappa_n}{z_0^n} (1 - n!\lambda_n) < 0
\end{eqnarray*}
However, by \eqref{Eq-sup-on-cvxz}, for every $\epsilon > 0$ there exists
$\varphi \in \cvx_{0, z_0}(\R^+)$ such that
\[
\Delta(\epi(\varphi)) > -\epsilon\mu_2(\epi(\varphi)).
\]
Combining the above with \eqref{Eq-bounds-on-cvxz} we obtain
\[
0 >
\Delta(\epi(l)) >
\Delta(\epi(\varphi)) >
-\epsilon\mu_2(\epi(\varphi)) >
-\epsilon C(n, z_0),
\]
thus $\frac{\kappa_n}{z_0^n} (1 - n!\lambda_n) \in
(-\epsilon C(n, z_0) , 0)$ for any $\epsilon > 0$, which is a
contradiction. We are left with the second case, and the proof is
complete.
\end{proof}
The exclusion of the ``one root case'' in Lemma \ref{lem-3-sign-changes}
is based on improving (i.e. increasing) the measure $\Delta$ of an
epi-graph, by means of intersecting it with a ray (while relying on the
convexity of the epi-graph). We next extend this idea to improve the
measure $\Delta$ of an epi-graph, using the three roots of $m$. We use
it to show that a maximizer of $s^\J_n$ must be of the form
\[
T_{a, b, x_0}(x) =
\left\{
	\begin{array}{lr}
		   a      x                  &:  0\le x\le x_0\\
		(a + b)(x - x_0) + a x_0 &:~~~~~    x_0 \le x  \\
	\end{array}
\right\},
\]
where $a\in[0,\infty),\, b\in[0,\infty],\, x_0\in[0,\infty)$.
To this end we will define a mapping which assigns to each function
in $\cvx_0(\R^+)$ a function of the form $T_{a, b, x_0}$.
\begin{definition}
The map $T:\cvx_0(\R^+)\to \cvx_0(\R^+)$ is defined as follows.
Let $x_1, x_2, x_3$ be such that
$L_{z_i}(\varphi) = \left[0, x_i \right]$, where $z_1,\, z_2,\, z_3$
are the three points where $m$ changes sign. If $\varphi \equiv 0$,
set $T(\varphi) := \varphi$. Otherwise $x_1 \le x_2 \le x_3 < \infty$.
Set
%
%
%
%
\begin{eqnarray*}
a&=&\frac{z_1}{x_1} > 0,\qquad
b = \frac{z_3 - z_2}{x_3 - x_2} - a\ge 0,\\
L_1(x) &=& ax,\qquad \hskip 21pt
L_2(x) = (a+b)(x - x_2) + z_2.
\end{eqnarray*}
Set $T(\varphi) := \max\left\{ L_1, L_2 \right\}$ (see Figure
\ref{fig:Figure2}).

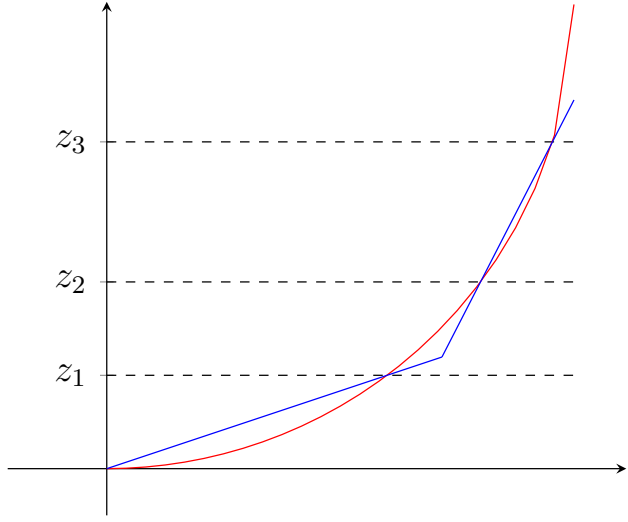
\begin{figure}[h]
	\centering
	\pgfplotsset{my style/.append style={axis x line=middle, axis y line=
			middle, axis equal }}
	\begin{tikzpicture}[scale=1.2]
	\begin{axis}[my style, xmin=-1, ymin = -1, xmax = 10, ymax = 10, ytick={2,4,7}, yticklabels={$z_1$, $z_2$, $z_3$}, xtick={0}, xticklabels={0}]
	\addplot [color=black, domain=0:10, dashed] {2};
	\addplot [color=black, domain=0:10, dashed] {4};
	\addplot [color=black, domain=0:10, dashed] {7};
	\addplot [color=red, domain=0:10] {10 - (100-x^2)^0.5};
	\addplot [color=blue, domain=0:7.17455] {0.3333*x};
	\addplot [color=blue, domain=7.1745:10] {1.9489*(x-8)+4};
	\end{axis}
	\end{tikzpicture}
	\caption{Definition of $T$: $\varphi$ in red, $T(\varphi)$ in blue.} \label{fig:Figure2}
\end{figure}
\end{definition}

\begin{rem}\label{rem-a-range}
Note that if the equation $L_1(x_0) = L_2(x_0)$ determines $x_0$
uniquely, then $T(\varphi) = T_{a, b, x_0}$, and $a x_0 \in
[z_1, z_2]$. If however, $L_1 = L_2$ has more than one solution,
then it is not hard to see that $\varphi$ is linear on $[0, x_3]$,
$b = 0$, and $T(\varphi) = L_1 = L_2$. Thus we may write in this case
too, that $T(\varphi) = T_{a, 0, x_0}$ and $a x_0 \in [z_1, z_2]$
(say, $x_0 := x_1$).
\end{rem}
The mapping $T$ improves upon the measure $\Delta$ of an epi-graph.
More precisely,
\begin{lem} \label{lem-Maximizers}
Let $\varphi \in \cvx_0(\R^+)$. Then,
$\Delta(\varphi) \le \Delta(T(\varphi))$. Moreover, if $\varphi$ is
not of the form $T_{a,b,x_0}$ then the inequality is strict.
\end{lem}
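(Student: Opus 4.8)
The plan is to compare the signed measure $\Delta$ on the two epi-graphs $\epi(\varphi)$ and $\epi(T(\varphi))$ by carefully analyzing how the regions where they differ are distributed among the three sign-strata of $m$. Recall that $d\Delta = n\kappa_n r^{n-1} m(z)\,dz\,dr$, so $\Delta$ assigns positive mass to horizontal strips $\{z\in[z_1,z_2]\}\cup\{z\in[z_3,\infty)\}$ and negative mass to $\{z\in[0,z_1]\}\cup\{z\in[z_2,z_3]\}$; crucially the density depends on $x$ only through the increasing factor $r^{n-1}$, so among sets lying in a fixed horizontal strip, moving mass to larger $r$ strictly decreases the density weight when $m<0$ there and strictly increases it when $m>0$ there. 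The key geometric observation is that by construction of $T$, the functions $\varphi$ and $T(\varphi)$ agree in the sense that their level sets coincide at heights $z_1,z_2,z_3$ (i.e. $L_{z_i}(T(\varphi))=[0,x_i]=L_{z_i}(\varphi)$), so the symmetric difference $\epi(\varphi)\,\triangle\,\epi(T(\varphi))$ is ``pinched'' at those three heights and hence splits into pieces each contained in one of the four strips.

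First I would set up notation: write $A = \epi(\varphi)\setminus\epi(T(\varphi))$ (the part we remove) and $B = \epi(T(\varphi))\setminus\epi(\varphi)$ (the part we add), so that $\Delta(T(\varphi)) - \Delta(\varphi) = \Delta(B) - \Delta(A)$. Then I would establish, using convexity of $\varphi$ together with the linearity of $T(\varphi)$ on each of $[0,x_2]$ and $[x_2,\infty)$, that within the strip $z\in[0,z_1]$ we have $\epi(\varphi)\subseteq\epi(T(\varphi))$ (since $L_1(x)=ax$ passes through $(x_1,z_1)$ and lies below $\varphi$ there by convexity), so this strip contributes only to $B$, and $m<0$ there, which is \emph{bad}; likewise the strip $z\in[z_2,z_3]$ contributes a region with $m<0$. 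Conversely the strips $z\in[z_1,z_2]$ and $z\in[z_3,\infty)$, where $m>0$, contain the parts of $A$. The heart of the argument is then a \emph{re-pairing}: within each strip, the mass removed from $A$ and the mass added through $B$ have equal $r$-length slices at every height in a suitable sense — more precisely, I would argue height-by-height (slicing at fixed $z$) that replacing $L_z(\varphi)=[0,x(z)]$ by $L_z(T(\varphi))=[0,t(z)]$ either enlarges the interval in a strip where $m(z)>0$ (good) or shrinks it where $m(z)<0$ (also good), because the linear interpolant $T(\varphi)$ bows \emph{outward} relative to the convex $\varphi$ precisely between consecutive interpolation heights in the right direction. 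Integrating $m(z)\cdot(\text{signed change in }\tfrac{1}{n}t(z)^n \text{ or } x(z)^n)$ over $z$ then yields $\Delta(T(\varphi))\ge\Delta(\varphi)$.

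For the strictness clause, I would note that equality forces the signed interval-length change to vanish for a.e. $z$ in each strip where $m\neq 0$, which forces $L_z(\varphi)=L_z(T(\varphi))$ for all $z$, i.e. $\varphi=T(\varphi)$, and this exactly means $\varphi$ has the form $T_{a,b,x_0}$ (with the degenerate linear case $b=0$ covered by Remark \ref{rem-a-range}). One has to handle separately the trivial case $\varphi\equiv 0$ (where $T(\varphi)=\varphi$ and there is nothing to prove), and the cases where some $x_i=\infty$, i.e. $\varphi$ does not reach height $z_i$ with a bounded level set — but by definition of $\gcf$ and the finiteness forced by $\mu_2(\epi(\varphi))<\infty$ one reduces to $x_1\le x_2\le x_3<\infty$ as in the definition of $T$.

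The main obstacle I anticipate is making the ``bows outward in the right direction'' claim precise and complete: one must verify, using only convexity of $\varphi$ and the three matching conditions at heights $z_1,z_2,z_3$, that on the strip $z\in[0,z_1]$ the graph of $T(\varphi)$ lies to the right of (below, in the $z$-direction) the graph of $\varphi$, on $[z_1,z_2]$ it lies to the left, on $[z_2,z_3]$ to the right, and on $[z_3,\infty)$ to the left — and in the last, unbounded strip one must also check integrability so that $\Delta$ of the relevant region is well-defined and the inequality is not vacuous. The chord $L_1$ through the origin and $(x_1,z_1)$ automatically lies below the convex $\varphi$ up to height $z_1$ (good direction), and the chord through $(x_2,z_2)$ and $(x_3,z_3)$ lies below $\varphi$ outside $[x_2,x_3]$ and above it inside — combining these with the fact that $T(\varphi)=\max\{L_1,L_2\}$ is exactly the tool to pin down the sign of the level-set change on each strip, but getting every strip right (especially where the two chords cross, near $x_0$) is the delicate bookkeeping that the proof must carry out.
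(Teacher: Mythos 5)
Your overall strategy coincides with the paper's: decompose $\R^+\times\R^+$ into the four horizontal slabs $S_1=\{0\le z\le z_1\}$, $S_2=\{z_1\le z\le z_2\}$, $S_3=\{z_2\le z\le z_3\}$, $S_4=\{z\ge z_3\}$ on which $m$ has constant sign, show that $\epi(T\varphi)$ compares monotonically with $\epi(\varphi)$ within each slab, and conclude by signs. However, the sign of the comparison is reversed in two of your three paragraphs. The chord $L_1$ passes through the two boundary points $(0,0)$ and $(x_1,z_1)$ of the graph of the \emph{convex} function $\varphi$, so convexity gives $\varphi(x)\le L_1(x)$ on $[0,x_1]$ (chord lies \emph{above} the graph, not below as you wrote). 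Consequently $T(\varphi)\ge\varphi$ on $[0,x_1]$, which means $\epi(T\varphi)\cap S_1\subseteq\epi(\varphi)\cap S_1$ — the exact opposite of your claim $\epi(\varphi)\subseteq\epi(T\varphi)$ on $S_1$. Because $m<0$ on $S_1$, shrinking the epi-graph there is \emph{good}, not bad. The same reversal infects your third paragraph (``lies to the right of / below $\ldots$''): the correct picture is that $\epi(T\varphi)\subseteq\epi(\varphi)$ in the slabs $S_1,S_3$ where $m<0$ and $\epi(T\varphi)\supseteq\epi(\varphi)$ in $S_2,S_4$ where $m>0$. This is exactly what the paper asserts, and it is what your own middle, ``height-by-height'' passage states correctly (enlarge where $m>0$, shrink where $m<0$) — your three paragraphs are mutually inconsistent.

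Because you believed the easy inclusion went the ``bad'' way, you invoked a ``re-pairing'' of mass between $A$ and $B$ within a slab; once the inclusions are oriented correctly this is unnecessary (and indeed cannot work as stated, since within each slab the set difference $\epi(\varphi)\triangle\epi(T\varphi)$ is one-sided: it is either entirely removed or entirely added). The correct argument is the one-line version: in each slab the set change and the sign of $m$ have the right product, so each slab contributes a nonnegative increment to $\Delta$. The convexity bookkeeping you flagged as the ``main obstacle'' is genuinely the content one must supply (the paper omits it with ``by construction''), but it is a routine slope-monotonicity argument: the right derivative of $\varphi$ at $x_1$ is $\ge a$ so $\varphi\ge L_1$ on $[x_1,\infty)$, the left derivative at $x_2$ is $\le a+b$ so $\varphi\ge L_2$ on $[0,x_2]$, $\varphi\le L_2$ on the chord interval $[x_2,x_3]$, and $\varphi\ge L_2$ on $[x_3,\infty)$; together with $\varphi\le L_1$ on $[0,x_1]$ these give the four inclusions with the correct orientation. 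Your strictness argument is fine once the inclusions are oriented correctly, and matches the paper's remark that strict inclusion of convex sets forces strict inequality of measures.
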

\begin{proof}
In Lemma \ref{lem-3-sign-changes} we have seen that there exist three
distinct points $z_1 < z_2 < z_3$ in which $m$ changes sign. This
implies that the sign of the density of the signed measure $\Delta$
is fixed on each of the following slabs:
\begin{eqnarray*}
	S_1 &=& \{(x,z): \,0  \le z \le z_1\} \\ 
	S_2 &=& \{(x,z): z_1 \le z \le z_2\} \\
	S_3 &=& \{(x,z): z_2 \le z \le z_3\} \\
	S_4 &=& \{(x,z): z_3 \le z        \}. 
\end{eqnarray*}	
By construction, we have
\begin{eqnarray*}
	\epi(T(\varphi)) \cap S_1 &\subseteq& \epi(\varphi) \cap S_1 \\
	\epi(T(\varphi)) \cap S_2 &\supseteq& \epi(\varphi) \cap S_2 \\
	\epi(T(\varphi)) \cap S_3 &\subseteq& \epi(\varphi) \cap S_3 \\
	\epi(T(\varphi)) \cap S_4 &\supseteq& \epi(\varphi) \cap S_4.
\end{eqnarray*}
Since the signed measure $\Delta$ is negative on $S_1,\,S_3$, and
positive on $S_2,\,S_4$, the statement follows. If one of the above
inclusions is strict, then (since the sets are convex) 
$\Delta(\varphi) < \Delta(T(\varphi))$. 
\end{proof}

\begin{lem}\label{lem-Maximizer-unique}
If $\varphi$ is not of the form $T_{a, b, x_0}$, then $\varphi$ is
not a maximizer of $s^\J_n$.
\end{lem}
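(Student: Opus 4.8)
The plan is to derive the statement immediately by combining Lemma~\ref{lem-Maximizers} with the two facts already recorded about the signed measure $\Delta = \nu_2 - \lambda_n\mu_2$: namely that $\Delta(\epi(\psi)) \le 0$ for every $\psi \in \cvx_0(\R^+)$ (equation~\eqref{eq-Delta-negative}), and that $\psi \in \cvx_0(\R^+)$ is a maximizer of $s^\J_n$ if and only if $\Delta(\epi(\psi)) = 0$ (Remark~\ref{rem-same-maximizers}). In other words, the lemma is a one-line consequence once Lemma~\ref{lem-Maximizers} is in hand.

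First I would assume that $\varphi \in \cvx_0(\R^+)$ is \emph{not} of the form $T_{a,b,x_0}$. Since $T(\varphi) = \varphi$ whenever $\varphi \equiv 0$ (and $\varphi\equiv 0$ is itself of the form $T_{a,b,x_0}$), this forces $\varphi \not\equiv 0$, so $T(\varphi)$ is defined by the $\max\{L_1,L_2\}$ formula. By the ``moreover'' part of Lemma~\ref{lem-Maximizers}, since $\varphi$ is not of the form $T_{a,b,x_0}$ the inequality there is strict: $\Delta(\epi(\varphi)) < \Delta(\epi(T(\varphi)))$.

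Next I would use that $T$ is, by construction, a map into $\cvx_0(\R^+)$, so $T(\varphi)$ is an admissible competitor and equation~\eqref{eq-Delta-negative} applies to it, giving $\Delta(\epi(T(\varphi))) \le 0$. Chaining the two inequalities yields $\Delta(\epi(\varphi)) < 0$, and hence, by Remark~\ref{rem-same-maximizers}, $\varphi$ is not a maximizer of $s^\J_n$. This is exactly the assertion of the lemma.

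There is essentially no obstacle remaining at this point — all the substantive work (defining the map $T$ and verifying the four slab inclusions on $S_1,\dots,S_4$, hence the monotonicity of $\Delta$) was carried out in Lemma~\ref{lem-Maximizers}. The only points deserving a word of care are: (i) that ``$\varphi$ not of the form $T_{a,b,x_0}$'' genuinely produces a strict inequality in Lemma~\ref{lem-Maximizers} rather than an equality (already built into its statement, via the observation that equality in all four inclusions would force $T(\varphi)=\varphi$); and (ii) that $T(\varphi)\in\cvx_0(\R^+)$, so that~\eqref{eq-Delta-negative} may legitimately be invoked for it. Both are immediate from the definition of $T$.
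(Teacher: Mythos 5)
Your proposal is correct and is essentially the same argument as the paper's: both combine the strict-inequality clause of Lemma~\ref{lem-Maximizers} with equation~\eqref{eq-Delta-negative} and Remark~\ref{rem-same-maximizers}. The only difference is cosmetic — the paper runs the argument by contradiction (assume $\varphi$ is a maximizer, get $0=\Delta(\epi(\varphi))<\Delta(\epi(T(\varphi)))\le 0$), while you argue directly ($\Delta(\epi(\varphi))<\Delta(\epi(T(\varphi)))\le 0$ implies $\Delta(\epi(\varphi))<0$, hence not a maximizer).
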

\begin{proof}
If $\varphi$ is a maximizer of $s^{\J}_n$ then by Remark
\ref{rem-same-maximizers}, $\Delta(\varphi) = 0$. If $\varphi$ is not
of the form $T_{a,b,x_0}$, then by Lemma \ref{lem-Maximizers} we have
\[
0 = \Delta(\varphi) < \Delta(T(\varphi)),
\]
which is a contradiction to \eqref{eq-Delta-negative}.
\end{proof}
We next show that the supremum of $s^{\J}_n$ over functions of the
form $T_{a,b,x_0}$ is not (strictly) smaller than the supremum over all
of $\cvx_0(\R^+)$.

\begin{lem}\label{lem-same-sups}
For every $n\ge 1$ :
\[
\sup_{a,b,x_0} s^{\J}_n(T_{a,b,x_0}) =
\sup_{\varphi\in\cvx_0(\R^+)} s^{\J}_n(\varphi).
\]
\end{lem}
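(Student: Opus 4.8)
The plan is to show that the supremum over all of $\cvx_0(\R^+)$ is \emph{attained}, so that Lemma~\ref{lem-Maximizer-unique} forces any maximizer to be of the form $T_{a,b,x_0}$; the equality of suprema then follows immediately, since the right-hand side is a supremum over a strictly larger class. Concretely, once we know $\lambda_n = s^\J_n(\varphi_0)$ for some $\varphi_0\in\cvx_0(\R^+)$, Lemma~\ref{lem-Maximizer-unique} tells us $\varphi_0 = T_{a,b,x_0}$ for some parameters, hence $\sup_{a,b,x_0}s^\J_n(T_{a,b,x_0}) \ge s^\J_n(\varphi_0) = \lambda_n = \sup_{\varphi\in\cvx_0(\R^+)}s^\J_n(\varphi)$, while the reverse inequality is trivial because each $T_{a,b,x_0}$ lies in $\cvx_0(\R^+)$.

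So the real content is an existence-of-maximizer argument. First I would use \eqref{Eq-sup-on-cvxz} to restrict attention to $\cvx_{0,z}(\R^+)$ for a fixed convenient height, say $z=z_2$ (or any $z>0$), so that every competitor satisfies $L_z(\varphi) = [0,1]$ and is squeezed between $\hat{\min}\{1^\infty_{[0,1]}, l_z\}$ and $\max\{1^\infty_{[0,1]}, l_z\}$ as recorded before \eqref{Eq-bounds-on-cvxz}. Take a maximizing sequence $\varphi_k\in\cvx_{0,z}(\R^+)$ with $s^\J_n(\varphi_k)\to\lambda_n$. Since each $\varphi_k$ is a nonnegative nondecreasing convex function on $\R^+$ that passes through the point $(1,z)$ on its boundary epigraph and is trapped in the fixed "bowtie" region, the sequence is uniformly bounded on compact subsets of the interior of its effective domain, and convex functions that are locally uniformly bounded are locally uniformly Lipschitz; by a Helly/Arzel\`a--Ascoli diagonal argument one extracts a subsequence converging locally uniformly to a limit $\varphi_\infty\in\cvx_0(\R^+)$, which still satisfies $L_z(\varphi_\infty)=[0,1]$ (the constraint is closed under this convergence, using the two-sided bounds to prevent the domain from collapsing or escaping). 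Finally I would pass the functional through the limit: by Lemma~\ref{lem-Steiner-J} and the analogous level-set formula, $s^\J_n(\varphi)$ is a ratio of two integrals of $\vol_1(L_z(\varphi)) = (\text{right endpoint of the }z\text{-level set})$ against the fixed densities defining $\mu_2$ and $\nu_2$; pointwise convergence of $\varphi_k$ gives pointwise convergence of the level-set lengths a.e., and the uniform bounds \eqref{Eq-bounds-on-cvxz} together with integrability of the densities (note $e^{-1/z}z^{-(n+2)}$ is integrable near $0$ and decays fast at $\infty$) let me invoke dominated convergence for both numerator and denominator, with the denominator bounded below by $c(n,z)>0$. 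Hence $s^\J_n(\varphi_\infty) = \lambda_n$, and the maximizer exists.

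The step I expect to be the main obstacle is the \emph{compactness / closedness} part: one must be careful that along the maximizing sequence the epigraphs do not degenerate in a way that escapes $\cvx_0(\R^+)$ or violates the normalization $L_z(\varphi)=[0,1]$, and that the convergence is strong enough to control the integrals near the two "bad" ends $r\to 0$ (where the level-set length is near $1$ but $z$ may run off to $\infty$) and $z\to 0^+$ (where the $\nu_2$-density blows up polynomially but is still integrable). The two-sided envelope before \eqref{Eq-bounds-on-cvxz} is exactly what tames this: it gives a uniform integrable majorant for $\vol_1(L_z(\varphi_k))$ against both densities, so dominated convergence applies and the limit stays in the constrained class. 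An alternative, if one prefers to avoid the limiting argument entirely, is to reparametrize: by Lemma~\ref{lem-Scaling} and the structure of $T_{a,b,x_0}$ one can directly compute $s^\J_n(T_{a,b,x_0})$ as an explicit function of two real parameters (after fixing a normalization), show this function attains its supremum on the relevant closed parameter region by a standard coercivity-at-the-boundary check, and combine with Lemma~\ref{lem-Maximizer-unique} to conclude — but the cleanest route is the existence argument above.
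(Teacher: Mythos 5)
Your strategy is genuinely different from the paper's. You first establish existence of a maximizer of $s^\J_n$ over all of $\cvx_0(\R^+)$ by a compactness argument, and only then invoke Lemma~\ref{lem-Maximizer-unique} to force the maximizer into the family $T_{a,b,x_0}$. The paper instead argues by contradiction: assuming $\delta_n := \sup_{a,b,x_0} s^\J_n(T_{a,b,x_0}) < \lambda_n$, it takes a near-maximizer $\varphi$, uses Lemma~\ref{lem-Maximizers} to get $\Delta(T(\varphi)) \ge \Delta(\varphi)$, and combines this with the uniform bounds \eqref{Eq-bounds-on-cvxz} on $\mu_2$ to derive $(\lambda_n-\delta_n) < \frac12(\lambda_n-\delta_n)$. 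The paper's route avoids any infinite-dimensional compactness/limit-passage entirely; existence of a maximizer is established only afterwards (Lemma~\ref{lem-Maximizer-exists}), over the \emph{finite-dimensional} parameter set $[z_1,z_2]\times[0,\infty]$, where it is trivial. Your route buys a cleaner logical shape (existence $\Rightarrow$ structure $\Rightarrow$ equality of suprema), at the cost of a compactness argument in $\cvx_0(\R^+)$ that needs real care.

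Two places in your compactness step are imprecise and need repair, although neither is fatal. First, the claim that the $\varphi_k$ are ``uniformly bounded on compact subsets of the interior of its effective domain'' is not correct as stated: on any compact subset of $(1,\infty)$ the envelope only gives the one-sided bound $\varphi_k(x)\ge z(x-1)$, and $\varphi_k$ may equal $+\infty$ there, so Arzel\`a--Ascoli does not directly apply to the $\varphi_k$. The clean fix is to work with the level-radius functions $r_k(w)=\vol_1(L_w(\varphi_k))$, which are nondecreasing, concave, and satisfy the two-sided pointwise bounds $\min\{w/z,1\}\le r_k(w)\le 1+w/z$ coming from the envelope; these \emph{are} locally uniformly bounded, so Helly/Arzel\`a--Ascoli applies, the limit $r_\infty$ is again concave and nondecreasing (hence is the level-radius function of some $\varphi_\infty\in\cvx_0(\R^+)$), and dominated convergence of $\mu_2,\nu_2$ then goes through exactly as you describe with the dominant $(1+w/z)^n$. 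Second, the normalization $L_z(\varphi_\infty)=[0,1]$ is \emph{not} automatically preserved in the limit: the envelope only pins $L_z(\varphi_\infty)$ between $[0,1)$ and $[0,2)$, and $r_k(z)=1$ can fail to pass to the limit at a jump of $r_\infty$. This is harmless — by Lemma~\ref{lem-Scaling} one simply rescales $\varphi_\infty$ afterwards — but it should be said, rather than asserted that the constraint is closed. Finally, the ``alternative'' you sketch at the end (coercivity on the two-parameter family plus Lemma~\ref{lem-Maximizer-unique}) does not by itself give the lemma: attaining the sup over the family shows $\delta_n$ is a max, not that $\delta_n=\lambda_n$, and Lemma~\ref{lem-Maximizer-unique} is silent unless one already knows a global maximizer exists; making that alternative rigorous essentially reproduces the paper's contradiction argument.
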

\begin{proof}
Denote
\[
\delta_n = \sup s^{\J}_n(T_{a,b,x_0}).
\]
Assume that $\delta_n < \lambda_n$. Recall that by
\eqref{Eq-bounds-on-cvxz}, for any $\psi\in\cvx_0(\R^+)$,
\begin{equation}\label{eq-more-bounds-on-cvxz}
\hat{\min}\left\{1_{[0,1]}^\infty, l_{z_1}\right\}
\le \psi \le
\max      \left\{1_{[0,1]}^\infty, l_{z_3}\right\}
\quad \Rightarrow \quad
0 < c(n,z_3) < \mu_2(\psi) < C(n,z_1)
\end{equation}
Choose $\epsilon =
(\lambda_n-\delta_n)\frac{c(n,z_3)}{2C(n,z_1)} > 0$, and let
$\tilde{\varphi}\in\cvx_0(\R^+)$ be such that $s^\J_n(\tilde{\varphi}) >
\lambda_n - \epsilon$. If $T_{\tilde{a},\tilde{b},x_0} = T(\tilde{\varphi})$,
then for $\varphi = \tilde{\varphi}_{x_0}$ one has
\[
T(\varphi) =
T(\tilde{\varphi}_{x_0}) =
\left(T(\tilde{\varphi})\right)_{x_o} =
T_{x_0 \tilde{a}, x_0 \tilde{b}, 1} \equiv
T_{           a ,            b , 1},
\]
and $s^\J_n(\varphi) = s^\J_n(\tilde{\varphi}) > \lambda_n - \epsilon$.
Moreover,
\begin{eqnarray*}
&&\hat{\min}\left\{1_{[0,1]}^\infty, l_{z_1}\right\}
\le \,\,\,\,\,\varphi\,\,\,\, \le
\max      \left\{1_{[0,1]}^\infty, l_{z_3}\right\},\\
&&\hat{\min}\left\{1_{[0,1]}^\infty, l_{z_1}\right\}
\le T(\varphi) \le
\max      \left\{1_{[0,1]}^\infty, l_{z_3}\right\},
\end{eqnarray*}
which by \eqref{eq-more-bounds-on-cvxz} implies that
$\frac{\mu_2(\varphi)}{\mu_2(T(\varphi))} \le \frac{C(n,z_1)}{c(n,z_3)}$.
Since $\nu_2(T(\varphi)) \le \delta_n \mu_2(T(\varphi))$, we get
\begin{eqnarray*}
(\delta_n-\lambda_n)\mu_2(T(\varphi)) &\ge&
\nu_2 (T(\varphi)) - \lambda_n \mu_2 (T(\varphi)) =
\Delta(T(\varphi)) \ge \Delta(\varphi) = \\ &=&
\nu_2(\varphi) - \lambda_n(\varphi) >
- \epsilon \mu_2(\varphi) \ge
-\epsilon \frac{C(n,z_1)}{c(n,z_3)} \mu_2 (T(\varphi)).
\end{eqnarray*}
The latter implies that
$(\lambda_n - \delta_n) <
\epsilon \frac{C(n,z_1)}{c(n,z_3)} =
\frac{1}{2}(\lambda_n-\delta_n)$, which is a contradiction.
\end{proof}
Lemma \ref{lem-same-sups} implies that the maximal value of $s^\J$ on
$\cvx_0(\R^n)$ can be found by studying a function of two variables
$F: [0,\infty) \times [0,\infty] \to \R^+$ given by
\[
F(a,b) =
s^{\J}_n(T_{a, b, 1}) =
\frac
{
\int_0^a e^{-\frac1z}z^{-(n+2)}\left(\frac{z}{a}\right)^n dz +
\int_a^\infty e^{-\frac1z}z^{-(n+2)}\left(\frac{z+b}{a+b}\right)^n dz
}
{
\int_0^a e^{-z}\left(\frac{z}{a}\right)^ndz +
\int_a^\infty e^{-z} \left(\frac{z+b}{a+b}\right)^n dz
},
\]
which is understood for $a=0$ and $b=\infty$ by taking a limit. Note
that $F$ is a rational function of $b$ with coef{}ficients that are
smooth in $a$, as as such is continuous on
$[z_1, z_2] \times [0, \infty]$.
It is easy to verify that when $a\neq0$ and $b < \infty$ we may write
\begin{equation}\label{eq-Fab-as-MN}
F(a,b) =
\frac
{
	(a+b)^n e^{-\frac1a} +
	a^n \int_a^\infty e^{-\frac1z}z^{-(n+2)}(z+b)^n dz
}
{
	(a+b)^n \int_0^a		e^{-z}  z   ^n dz +
	 a	 ^n \int_a^\infty	e^{-z} (z+b)^n dz
}
\end{equation}
Note that by Remark \ref{rem-a-range}, it suffices to look for a
maximum of $F$ when $a \in [z_1, z_2]$.

%

\begin{lem}\label{lem-Maximizer-exists}
There exist $a\in [z_1, z_2],\, b\in[0, \infty]$ such that
\[
s^{\J}_n(T_{a,b,1}) = \lambda_n.
\]
\end{lem}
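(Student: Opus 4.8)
The plan is to establish the existence of a maximizer by a compactness argument on the two-variable function $F$. By Lemma~\ref{lem-same-sups}, together with Remark~\ref{rem-a-range}, we have $\lambda_n = \sup\{F(a,b) : a\in[z_1,z_2],\, b\in[0,\infty]\}$. The domain $[z_1,z_2]\times[0,\infty]$ is compact (here $[0,\infty]$ denotes the one-point compactification, i.e. the extended half-line), so it suffices to show that $F$ extends to a continuous function on this compact set; the supremum is then attained at some $(a,b)$ in the domain, and that value is exactly $\lambda_n$.

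First I would verify continuity of $F$ on the interior region $a\in[z_1,z_2]$, $b\in[0,\infty)$. Using the representation \eqref{eq-Fab-as-MN}, both numerator and denominator are finite (all the integrals converge since $e^{-1/z}z^{-(n+2)}(z+b)^n$ decays like $z^{-2}$ at infinity and the $e^{-z}(z+b)^n$ integral is an incomplete Gamma-type integral), and the denominator is strictly positive for every $a\in[z_1,z_2]$ and $b\in[0,\infty)$, since $\int_0^a e^{-z}z^n\,dz > 0$. Both integrals depend continuously (indeed smoothly) on $a$ and are polynomial in $b$, so $F$ is continuous there. Second, I would handle the boundary value $b=\infty$: dividing numerator and denominator of \eqref{eq-Fab-as-MN} by $(a+b)^n$ and letting $b\to\infty$ (dominated convergence, using $((z+b)/(a+b))^n \to 1$ monotonically for $z\ge a$, bounded by $1$ for the $e^{-z}$ integral and by an integrable function for the $\nu_2$ integral after the $z^{-(n+2)}$ factor is accounted for), one gets
\[
F(a,\infty) =
\frac{e^{-\frac1a} + \int_a^\infty e^{-\frac1z}z^{-(n+2)}\,dz}
     {\int_0^a e^{-z}\,dz + \int_a^\infty e^{-z}\,dz}
= e^{-\frac1a} + \int_a^\infty e^{-\frac1z}z^{-(n+2)}\,dz,
\]
which is continuous in $a$ and is the limit of $F(a,b)$ as $b\to\infty$ uniformly on $a\in[z_1,z_2]$. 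The case $a=0$ was already excluded from consideration by Remark~\ref{rem-a-range} (we only need $a\in[z_1,z_2]$, and $z_1>0$), so no degeneration of the denominator occurs.

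The main obstacle is the continuity/limit analysis at $b=\infty$: one must check that the $\nu_2$-integral $\int_a^\infty e^{-1/z}z^{-(n+2)}(z+b)^n\,dz$, after normalization by $(a+b)^n$, converges to $\int_a^\infty e^{-1/z}z^{-(n+2)}\,dz$ and that this convergence is compatible with the simultaneous limit in $a$ — i.e. that $F$ is jointly continuous up to the boundary face $\{b=\infty\}$. This follows from a routine dominated-convergence argument since $((z+b)/(a+b))^n \le (z/a)^n$ is false but $((z+b)/(a+b))^n \le \max(1,(z/a)^n)$ does hold for $a\le z_2$, giving an $a$-independent integrable majorant on the compact $a$-interval. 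Once joint continuity on the compact domain $[z_1,z_2]\times[0,\infty]$ is in hand, the extreme value theorem yields $(a,b)$ with $F(a,b) = \lambda_n$, and since $F(a,b) = s^\J_n(T_{a,b,1})$ by definition, the lemma follows.
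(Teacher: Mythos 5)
Your approach is essentially the paper's: both proofs show $\lambda_n = \sup_{(a,b)\in[z_1,z_2]\times[0,\infty]}F(a,b)$ via Lemma~\ref{lem-same-sups} and Remark~\ref{rem-a-range}, then invoke compactness of $[z_1,z_2]\times[0,\infty]$ together with continuity of $F$ on that set to conclude the supremum is attained. The paper takes maximizing sequences and extracts convergent subsequences, which is the same thing in sequential form; the paper's continuity claim is justified by observing (before the lemma) that $F$ is a rational function of $b$ with coefficients smooth in $a$, whereas you give a dominated-convergence argument. Both are valid, and your write-up is a bit more self-contained on the continuity point.

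Two small inaccuracies, neither fatal. First, your displayed formula for $F(a,\infty)$ is wrong: after dividing \eqref{eq-Fab-as-MN} by $(a+b)^n$ and letting $b\to\infty$, the $a^n$ prefactors survive and the $z^n$ weight in $M_1=\int_0^a z^n e^{-z}\,dz$ must be kept, so the correct limit is
\[
F(a,\infty)=\frac{e^{-1/a}+a^n\int_a^\infty e^{-1/z}z^{-(n+2)}\,dz}{\gamma(n+1,a)+a^n e^{-a}},
\]
which is the paper's $G(a)$; your stated limit drops the $a^n$'s and mangles the denominator. This does not affect the validity of the compactness argument, since you only need that a finite limit exists, but the formula as written would not compile into the paper without correction. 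Second, you assert that $\bigl((z+b)/(a+b)\bigr)^n\le (z/a)^n$ is false; in fact it is true on the relevant range $z\ge a$ (it is equivalent to $a\le z$), so the simpler majorant $(z/a)^n\le(z/z_1)^n$ already works and the $\max$ construction is unnecessary.
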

\begin{proof}
Let $\{a_k\} \subset [z_1, z_2]$, $\{b_k\}\subset [0,\infty]$
be two sequences with $s^{\J}_n(T_{a_k, b_k, 1}) \nearrow \lambda_n$.
Since $a_k \in [z_0, z_1]$, there exists a subsequence $\{a_{k_l}\}$
such that $a_{k_l} \to a$ for some $a\in [z_1, z_2]$. In addition,
there exists a subsequence $\{b_{k_{l_m}}\}$ of $\{b_{k_l}\}$ that
convergences to some $b \in [0,\infty]$. Continuity of $F$ implies
that $s^{\J}_n(T_{a, b, 1}) = \lambda_n$.
\end{proof}
We have seen that $s^{\J}_n$ has a maximizer of the form
$T_{a, b, 1}$. The next two lemmas provide bounds on the parameters
$a,\, b$ of such a maximizer.
\begin{lem} \label{lem-A-Bound}
Let $\alpha\in \left(\frac{1}{2}, 1\right)$. There exists $n_0$
such that if $n > n_0$ and  $T_{a, b, 1}$ is a maximizer of
$s^{\J}_n$, then:
\[
\frac{1}{n+n^{\alpha}} \le a \le \frac{1}{n-n^\alpha}
\]
\end{lem}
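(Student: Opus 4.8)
The plan is to locate the maximizer's parameter $a$ by a first-order (stationarity) analysis of the two-variable function $F(a,b)$ from \eqref{eq-Fab-as-MN}, combined with a Laplace-type asymptotic estimate of the integrals appearing there. Recall from Remark \ref{rem-MJ-bounded} that $\lambda_n \asymp n!$; more precisely $\lambda_n = (1+o(1))n!$ follows from the preliminary bounds together with the structure already established (and from Theorem \ref{Thm-maximizers}'s statement $na_n\to1$, which is what we are proving). The first step is to fix $b$ at its optimal value and differentiate $F(a,b)$ with respect to $a$. Writing $F = N(a,b)/D(a,b)$ with $N,D$ the numerator and denominator of \eqref{eq-Fab-as-MN}, a maximizer satisfies $\partial_a N \cdot D = N \cdot \partial_a D$, i.e. $\partial_a N = F \cdot \partial_a D = \lambda_n \, \partial_a D$. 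The key observation is that in both $N$ and $D$ the variable $a$ enters only through the combination $a^n$, the factor $(a+b)^n$, the lower/upper limit $a$ of the integrals, and the factor $e^{-1/a}$ in $N$. Differentiating, the boundary terms from the integral limits cancel between the two pieces (the integrand at $z=a$ is common), so $\partial_a N$ and $\partial_a D$ reduce to clean expressions: essentially $\partial_a D = n a^{n-1}\!\int_a^\infty e^{-z}(z+b)^n dz + n(a+b)^{n-1}\!\int_0^a e^{-z}z^n dz$ and similarly for $N$ with the weight $e^{-1/z}z^{-(n+2)}$ in the $z$-integral and the extra $e^{-1/a}$-term contributing $a^{-2}e^{-1/a}(a+b)^n$ after differentiation (plus its own $n(a+b)^{n-1}e^{-1/a}$ term).

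Next I would estimate these integrals asymptotically. The measure $e^{-z}z^n\,dz$ concentrates near $z=n$, so $\int_0^a e^{-z}z^n dz$ is negligible (super-exponentially small relative to $n!$) when $a \ll n$, while $\int_a^\infty e^{-z}z^n dz \approx n!$ in that regime. Hence $\partial_a D \approx n a^{n-1} n!$, and similarly the dominant part of $D$ is $a^n\!\int_a^\infty e^{-z}(z+b)^n dz$. Meanwhile the measure $e^{-1/z}z^{-(n+2)}\,dz$ — after the change of variable $z\mapsto 1/z$ — is $e^{-w}w^n\,dw$, so it concentrates near $z = 1/n$; thus on $(a,\infty)$ with $a$ of order $1/n$ the relevant mass is again comparable to $n!$ (in the $w$-variable) but now the boundary term $a^{-2}e^{-1/a}(a+b)^n$ is \emph{not} negligible — in fact $e^{-1/a}$ with $a\approx 1/n$ is of order $e^{-n}$, which by Stirling is comparable to $n!/n^n$, the same order as $a^n n!$ when $a\approx 1/n$. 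Plugging these estimates into the stationarity equation $\partial_a N = \lambda_n\partial_a D$, the $n!$ factors and the powers of $a$ largely cancel, and one is left with an equation that, to leading order, forces $1/a = n + O(n^\alpha)$: schematically, the $e^{-1/a}/a^2$ term must balance a term of size $n\cdot a^{n-1}\cdot(\text{Stirling factor})$, and matching the exponential rates $e^{-1/a}$ versus $e^{-n}\cdot(\text{polynomial})$ pins $1/a$ to within $n^\alpha$ of $n$. To make this rigorous one does not even need the exact stationary equation: it suffices to show that if $a \le 1/(n+n^\alpha)$ or $a \ge 1/(n-n^\alpha)$ then $F(a,b) < \lambda_n$ for every $b$, by exhibiting a competitor (e.g. replacing $a$ by $1/n$) with strictly larger value, using the concentration estimates above to control the error terms by $e^{-c n^{2\alpha-1}}$, which beats any polynomial loss since $\alpha > 1/2$.

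The main obstacle I anticipate is handling the joint dependence on $b$: the integral $\int_a^\infty e^{-z}(z+b)^n dz$ and its numerator analogue behave very differently for $b$ small, $b$ of order $n$, and $b=\infty$, and one must ensure the bound on $a$ holds \emph{uniformly} over all these regimes of the (unknown) optimal $b$. I would dispose of this by a case split: (i) when $b$ is large (say $b \ge \sqrt n$), the $(z+b)^n$ factors are dominated by $b^n$ times a slowly varying correction, $b^n$ cancels top-to-bottom, and $F(a,b)$ is close to the value at $b=\infty$, namely the pure-power case $\int_0^\infty e^{-1/z}z^{-2}dz \big/ \int_0^\infty e^{-z}dz$-type ratio whose optimal $a$ we can analyze directly; (ii) when $b$ is bounded, the integrals are genuine $\Gamma$-type integrals amenable to the Laplace estimates above, with the concentration point of $e^{-z}(z+b)^n$ still near $z=n$. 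In each case the same conclusion $a = 1/n + O(n^{\alpha-2})$ emerges. A secondary technical point is justifying differentiation under the integral sign and the interchange of the $\sup_b$ with the limit, but these are routine given the continuity and the uniform bounds \eqref{Eq-bounds-on-cvxz} already recorded. Finally, I would double-check that the claimed asymptotic $\lambda_n = (1+o(1))n!$, which feeds into the cancellation, is available — if not, one proves the weaker two-sided bound $n! \le \lambda_n \le e^{o(n)}n!$ first (which already follows from the sharper form of Remark \ref{rem-MJ-bounded} plus a soft argument), and that is enough for the exponential-rate matching that yields the $n^\alpha$ window.
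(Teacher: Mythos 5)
Your proposal takes a genuinely different route from the paper's proof, and unfortunately it has a gap in the form of a circularity that the paper's own argument is carefully designed to avoid. Your central tool is a stationarity equation $\partial_a N = \lambda_n\,\partial_a D$ combined with the asymptotic $\lambda_n = (1+o(1))n!$, and you acknowledge that without this tight control on $\lambda_n$ the ``exponential-rate matching'' doesn't pin $1/a$ to within $n^\alpha$ of $n$. But this asymptotic is exactly what Theorem~\ref{Thm-asymp} establishes, and the paper's proof of Theorem~\ref{Thm-asymp} \emph{uses} Lemma~\ref{lem-A-Bound}: the upper bound $\lambda_n \le (1+C/n)n!$ is deduced from $a < 1/(n-n^\alpha)$ via \eqref{eq-N2=lambda-N2}. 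Your fallback claim that the intermediate bound $\lambda_n \le e^{o(n)}n!$ ``already follows from the sharper form of Remark~\ref{rem-MJ-bounded}'' is not substantiated: that remark only gives $\lambda_n \le C^n n! = e^{O(n)}n!$, and no soft argument in the paper improves this to sub-exponential prior to Lemma~\ref{lem-A-Bound}. So your main pathway begs the question, and your escape route does not currently exist.

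The paper's proof is both simpler and immune to this issue, because it never differentiates $F$ and never needs an upper bound on $\lambda_n$ at all. It invokes Remark~\ref{rem-a-range}, which was already established in the construction of the map $T$: any maximizer $T_{a,b,1}$ must have $a \in [z_1, z_2]$, where $z_1 < z_2 < z_3$ are the sign changes of $m(z) = e^{-1/z}z^{-(n+2)} - \lambda_n e^{-z}$. This is a structural constraint that comes for free, with no first-order analysis. Then, since $m_\lambda$ is decreasing in $\lambda$ and $\lambda_n \ge n!$ (the \emph{trivial} bound from Remark~\ref{rem-MJ-bounded}), one has $m = m_{\lambda_n} \le m_{n!}$ pointwise, so $[z_1, z_2] \subseteq [z_1(n!), z_2(n!)]$. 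The roots of $m_{n!}$ no longer involve the unknown $\lambda_n$, and bounding them reduces to two pointwise sign checks via Stirling: $m_{n!}(1/n) > 0$ and $m_{n!}\big(1/(n\pm n^\alpha)\big) < 0$, the latter because $(1\pm n^{\alpha-1})^n e^{\mp n^\alpha} = e^{-\frac12 n^{2\alpha-1}(1+o(1))}$ kills the polynomial factor $(n\pm n^\alpha)^2$ against $\sqrt{2\pi n}$ whenever $\alpha > \frac12$. No case split on $b$, no Laplace asymptotics for the integrals, and crucially no a priori knowledge of $\lambda_n$ beyond $\lambda_n \ge n!$.

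In short: your approach could in principle be pushed through once one also proves the stationarity identities of \eqref{eq-N1=lambda-N1}--\eqref{eq-N2=lambda-N2} and breaks the circularity (e.g.\ by a bootstrap), but it would amount to re-deriving, via Laplace estimates and a $b$-case analysis, a constraint on $a$ that the paper already has in hand from Remark~\ref{rem-a-range}. I'd encourage you to look for and exploit such structural constraints before resorting to first-order asymptotics, especially when the quantity you'd need (here, a sharp estimate on $\lambda_n$) is itself downstream of the lemma you're proving.
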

\begin{proof}
By remark \ref{rem-a-range}, $a \in [z_1, z_2]$, thus we need to
estimate $z_1, z_2$, the first two roots of $m(z)$. We do this by
considering the following family of functions.
\[
m_\lambda(z) =
e^{-\frac{1}{z}}z^{-(n+2)} - \lambda e^{-z}.
\]
For any $\lambda \in \left[\frac{1}{n!}, \lambda_n\right]$ we may
repeat the proof of Lemma \ref{lem-3-sign-changes}, and deduce that
$\,m_{\lambda}$ has three sign changes, denoted by
$z_1(\lambda) < z_2(\lambda) < z_3(\lambda)$. Since  $m_\lambda(z)$
is decreasing in $\lambda$, we get by \eqref{eq-MJ-prelim-bounds}
that
\[
m(z) = m_{\lambda_n}(z) \le m_{n!}(z),
\]
thus $a \in
[z_1,							   z_2] =
\left[z_1(\lambda_n), z_2(\lambda_n)\right] \subseteq
\left[z_1(n!),				 z_2(n!)\right]$. Note that
$m_{n!}\left(\frac{1}{n}\right)>0$. To check the sign of $m_{n!}$ at
the points $\left(\frac{1}{n\pm n^\alpha}\right)$ we note that:
\[
\frac{\left(1\pm\frac{1}{n^{1-\alpha}}\right)^n}{e^{\pm n^\alpha}}
=
e^{n
\left[
\log\left(1\pm\frac{1}{n^{1-\alpha}}\right)\mp\frac{1}{n^{1-\alpha}}
\right]}
=
e^{-\frac12
\left[
n^{2\alpha - 1} + \overline{o} \left(n^{2\alpha - 1}\right)
\right]
},
\]
which tends to $0$ for $\alpha \in \left(\frac12, 1\right)$. Thus,
\[
m_{n!}\left(\frac{1}{n\pm n^\alpha}\right) =
\left(\frac{n}{e}\right)^n
\left[
(n\pm n^\alpha)^2 
e^{
-\frac12
\left[
	n^{2\alpha - 1} + \overline{o} \left(n^{2\alpha - 1}\right)
\right]
}
- \sqrt{2\pi n}\left(1 + \overline{o} \left( 1 \right)\right)
\right]
< 0
\]
for $n$ large enough. Since
$m_{n!}\left(\frac{1}{n\pm n^\alpha}\right)$ are negative and
$m_{n!}\left(\frac1n\right)$ is positive, we get 
\[
a \in
\left[z_1(n!), z_2(n!)\right] \subset
\left[\frac{1}{n+n^{1-\alpha}}, \frac{1}{n-n^{1-\alpha}}\right].
\]
\end{proof}

\begin{lem}\label{lem-infinite-h}
The function $F(a,b) = s^{\J}_n(T_{a, b, 1})$ is maximized only on
points of the form $(a, \infty)$.
\end{lem}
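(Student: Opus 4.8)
The plan is to show that for any fixed admissible value of $a$ (in particular $a \in [z_1, z_2]$), the function $b \mapsto F(a,b)$ is maximized at $b = \infty$, and moreover that the maximum is strict unless one is forced to the boundary. Since Lemma \ref{lem-Maximizer-exists} guarantees a maximizer of the form $T_{a,b,1}$ with $a \in [z_1,z_2]$, it then follows that every maximizer has $b = \infty$. The natural tool is the representation \eqref{eq-Fab-as-MN}, writing $F(a,b) = N(a,b)/D(a,b)$ with
\[
N(a,b) = (a+b)^n e^{-\frac1a} + a^n\int_a^\infty e^{-\frac1z}z^{-(n+2)}(z+b)^n\,dz, \qquad
D(a,b) = (a+b)^n\int_0^a e^{-z}z^n\,dz + a^n\int_a^\infty e^{-z}(z+b)^n\,dz.
\]
First I would compute $\partial_b F$ and observe that its sign is the sign of $\partial_b N \cdot D - N \cdot \partial_b D$. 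Both $\partial_b N$ and $\partial_b D$ are again of the same shape: $\partial_b\big[(a+b)^n\big] = n(a+b)^{n-1}$ and $\partial_b\big[(z+b)^n\big] = n(z+b)^{n-1}$, so after dividing by $n$ everything reduces to comparing weighted integrals of $(z+b)^{n-1}$ against $(z+b)^n$ with the two weights $e^{-\frac1z}z^{-(n+2)}$ (numerator side) and $e^{-z}$ (denominator side), plus the boundary term at $z=a$.

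The key structural fact I expect to use is a \emph{correlation/Chebyshev-type inequality}: the ratio $N(a,b)/D(a,b)$ can be viewed as $\E_{P_b}[\,g\,]$, where $P_b$ is a probability measure on $[a,\infty)$ (together with an atom at $z = a$ coming from the $e^{-1/a}$ term) with density proportional to $e^{-z}(z+b)^n$ against which we integrate the likelihood ratio $g(z) = e^{-\frac1z + z}z^{-(n+2)}$ of the two "volume" weights. As $b$ increases, the measure $P_b$ tilts toward larger $z$ (the family $\{(z+b)^n\}_b$ is totally positive / forms a monotone likelihood ratio family in $b$), so $P_b$ is stochastically increasing in $b$. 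Since $g(z) = e^{z - 1/z}z^{-(n+2)}$ is \emph{eventually increasing} on the relevant range — indeed $g'(z)/g(z) = 1 + 1/z^2 - (n+2)/z$, which has the same sign as $z^2 - (n+2)z + 1$, so $g$ is increasing once $z$ exceeds the larger root $\tfrac{n+2+\sqrt{(n+2)^2-4}}{2}$, and one needs the slightly more careful observation that on $[a,\infty)$ with $a \le z_2$ the mass is concentrated where $g$ is increasing — the expectation $\E_{P_b}[g]$ is nondecreasing in $b$, and strictly increasing unless $P_b$ is degenerate. Letting $b \to \infty$ then gives the supremum. I would phrase this either via an explicit sign computation of $\partial_b F$ (clearing denominators and using a single Chebyshev rearrangement) or via the stochastic-dominance language; the former is more self-contained.

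The main obstacle is that $g$ is not monotone on all of $(0,\infty)$: it \emph{decreases} on the small interval below $z_1$ (this is exactly why $m$ has three sign changes), so the naive "monotone weight $\times$ stochastically increasing measure" argument does not apply on the full half-line. The fix is to exploit that in \eqref{eq-Fab-as-MN} the integrals run only over $[a,\infty)$ with $a \in [z_1,z_2]$, where $g$ is increasing (one should check $z_1$ already lies to the right of the critical point $\tfrac{n+2-\sqrt{(n+2)^2-4}}{2}$ where $g'$ changes sign near $0$, and to the left of the point where $g$ starts increasing — here one uses the estimates on $z_1,z_2$ from Lemma \ref{lem-A-Bound}, or a direct comparison showing the boundary atom at $z=a$ does not spoil monotonicity). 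Handling the atom at $z = a$ and the endpoint $b=0$ case cleanly, and ruling out equality at finite $b$, is the delicate bookkeeping; once the sign of $\partial_b F$ is pinned down to be strictly positive for $b \in [0,\infty)$, the conclusion that maximizers occur only at $(a,\infty)$ is immediate.
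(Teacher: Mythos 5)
Your plan rests on a monotonicity claim for $g(z)=e^{z-1/z}z^{-(n+2)}$ that is false on the relevant range, and this is where the argument breaks. You correctly compute that $g'/g$ has the sign of $z^2-(n+2)z+1$, so $g$ increases on $(0,z_-)$, decreases on $(z_-,z_+)$, and increases again on $(z_+,\infty)$, with $z_\pm=\tfrac{(n+2)\pm\sqrt{(n+2)^2-4}}{2}$, i.e.\ $z_-\approx\tfrac1{n+2}$ and $z_+\approx n+1$. But you then locate the decreasing piece ``on the small interval below $z_1$'' and assert $g$ is increasing on $[a,\infty)$ for $a\in[z_1,z_2]$. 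Both claims are wrong: since $z_1<z_-<z_2$, $g$ is in fact \emph{increasing} below $z_1$, and the interval $[a,\infty)$ with $a\approx 1/n$ contains essentially all of the long decreasing stretch $(z_-,z_+)\approx(\tfrac1{n+2},\,n+1)$. Worse, the measure $P_b$ you tilt (density $\propto e^{-z}(z+b)^n$ on $[a,\infty)$) concentrates near $z\approx n-b$, i.e.\ squarely inside this decreasing region for $b$ of moderate size, so the ``monotone $g$ against a stochastically increasing family'' argument simply does not apply. There is also the unaddressed bookkeeping issue that the atom at $z=a$ carries the ratio $e^{-1/a}/\gamma(n+1,a)$, which is not $g(a)$, so the object is not literally $\E_{P_b}[g]$.

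The paper does something genuinely different and your plan skips its key step. Rather than proving $\partial_b F>0$ for all $(a,b)$, the paper shows it only at points satisfying the two critical-point conditions $F(a,b)=\lambda_n$ and $F_a(a,b)=0$. These force $N_1/M_1=N_2/M_2=\lambda_n$, and in particular the identity $\int_a^\infty (z+b)^n m(z)\,dz=0$. Writing $(z+b)^n=z(z+b)^{n-1}+b(z+b)^{n-1}$ converts the problem into showing $P_{n-1}(b):=\int_a^\infty z(z+b)^{n-1}m(z)\,dz<0$, and the paper proves this by checking that every binomial coefficient $c_k=\binom{n-1}{k}\bigl(\gamma(k+1,1/a)-\lambda_n\Gamma(n-k+1,a)\bigr)$ is negative, using $\lambda_n\ge n!$ and $\Gamma(m+1,m)\ge m!/2$. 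This is a hard, moment-identity-driven computation, not a soft correlation inequality; the vanishing of the $n$-th ``moment'' of $m$ is exactly what compensates for the non-monotonicity of $g$. Without invoking that identity (or some substitute), I do not see how your stochastic-dominance route can be repaired.
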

\begin{proof}
We know by Lemma \ref{lem-Maximizer-exists} that $F$ attains a
maximum, and we will show that it does not attain a maximum at any
point $(a,b)$, where $b \in [0,\infty)$. We shall do this by showing
that $F_b(a,b) > 0$ for $b \neq \infty$, provided that the following
conditions hold:
\begin{equation}\label{eq-ab-is-max}
F(a,b) = \lambda_n,
\end{equation}
\begin{equation}\label{eq-Fa-vanishes}
F_a(a,b) = \frac{\partial F}{\partial a}(a,b) = 0.
\end{equation}
This would imply that $F$ has no (global) maximum points in
$\left[z_1, z_2\right] \times [0,\infty)$.
Recall that by \eqref{eq-Fab-as-MN} we may write,
for $a\neq 0,\,b \neq \infty$:
\[
F(a, b) = \frac{(a+b)^n N_1+a^n N_2}{(a+b)^n M_1 + a^n M_2},
\]
where
\begin{eqnarray*}
	N_1 &=& e^{-\frac1a}, \qquad\qquad\qquad\qquad\qquad\quad\,\,\,
	N_2  =  \int_a^\infty (z+b)^n e^{-\frac1z}z^{-(n+2)} dz, \\
	M_1 &=& \int_0^a z^n e^{-z}dz = \gamma(n+1,a), \qquad
	M_2  =  \int_a^\infty (z+b)^n e^{-z}dz .
\end{eqnarray*}
Computing partial derivatives with respect to $a$ and $b$ yields:
\begin{eqnarray*}
	\frac{\partial N_1}{\partial a} &=& \frac{1}{a^2}e^{-\frac1a}, \qquad
	\frac{\partial N_2}{\partial a}  =  -(a+b)^ne^{-\frac1a}a^{-(n+2)}, \\
	\frac{\partial M_1}{\partial a} &=& a^ne^{-a}, \qquad\,
	\frac{\partial M_2}{\partial a}  =  -(a+b)^n e^{-a}.
\end{eqnarray*}
and
\begin{eqnarray*}
	\frac{\partial N_1}{\partial b} &=& 0, \qquad
	\frac{\partial N_2}{\partial b}  =  n\int_a^\infty (z+b)^{n-1} e^{-\frac1z}z^{-(n+2)} dz \\
	\frac{\partial M_1}{\partial b} &=& 0, \qquad
	\frac{\partial M_2}{\partial b} = n\int_a^\infty (z+b)^{n-1} e^{-z} dz
\end{eqnarray*}
Looking for a possible critical point, we assume \eqref{eq-Fa-vanishes}
holds, and use the relation
$(a+b)^n \frac{\partial N_1}{\partial a} + a^n \frac{\partial N_2}{\partial a} = 0,\,$
and similarly 
$(a+b)^n \frac{\partial M_1}{\partial a} + a^n \frac{\partial M_2}{\partial a} = 0,\,$
to get:
\begin{eqnarray*}
	& &\left[(a+b)^{n-1}N_1 + a^{n-1}N_2\right]\left[(a+b)^n M_1 + a^n M_2\right] =\\
	&=&\left[(a+b)^{n-1}M_1 + a^{n-1}M_2\right]\left[(a+b)^n N_1 + a^n N_2\right],
\end{eqnarray*}
or equivalently $N_1 M_2 = M_1 N_2$, which together with
\eqref{eq-ab-is-max} implies that
\begin{equation}\label{eq-Ni-are-lambda-Mi}
\frac{N_1}{M_1} = \frac{N_2}{M_2} = \lambda_n.
\end{equation}
Next we wish to show that $\frac{\partial F}{\partial b} > 0$, so we
may use \eqref{eq-Ni-are-lambda-Mi} to write:
\begin{eqnarray*}
	( (a+b)^n M_1 &+& a^n M_2 )^2 \frac{\partial F}{\partial b} = \\
	&=&
	\left[ n(a+b)^{n-1}N_1 + na^n \int_a^\infty (z+b)^{n-1} e^{-\frac1z}z^{-(n+2)} dz\right]
	\left[(a+b)^nM_1 + a^n M_2\right] \\ \\
	&-&
	\left[ n(a+b)^{n-1} M_1 + na^n \int_a^\infty (z+b)^{n-1} dz\right]
	\left[ (a+b)^n      N_1 +  a^n N_2\right] \\ \\
	&=& na^n \left[(a+b)^nM_1 + a^n M_2\right]
	\left[
	\int_a^\infty
	(z+b)^{n-1}
	\left( e^{-\frac1z}z^{-(n+2)} - \lambda_n e^{-z} \right) dz
	\right].
\end{eqnarray*}
Recall that $m(z) = e^{-\frac1z}z^{-(n+2)} - \lambda_n e^{-z}$, thus
\begin{equation}\label{eq-sign-of-Fb}
\frac{\partial F}{\partial b} > 0
\qquad\Leftrightarrow\qquad
\int_a^\infty
(z+b)^{n-1} m(z) dz
> 0
\end{equation}
By \eqref{eq-Ni-are-lambda-Mi} we have
$\int_a^\infty (z+b)^n m(z) dz = N_2 - \lambda_n M_2  = 0$, which we
may write as
\begin{eqnarray*}
	P_{n-1}(b) :=
	\int_a^\infty z(z+b)^{n-1} m(z) dz =
	-b\int_a^\infty  (z+b)^{n-1} m(z) dz.
\end{eqnarray*}
By \eqref{eq-sign-of-Fb}, it suffices to show that $P_{n-1}(b) < 0$.
Note that $P_{n-1} (b) = \sum_{i=0}^{n-1} c_k b^k$ is a polynomial of
degree $n-1$ with coef{}ficients
\[
c_k = {{n-1} \choose k}\int_a^\infty z^{n-k} m(z) dz, \qquad k=0,1,\dots n-1.
\]
It is not hard to check that 
\begin{eqnarray*}
	{n-1 \choose k}^{-1}c_k &=&
	\int_a^\infty z^{n-k}e^{-\frac1z}z^{-(n+2)}dz -
	\lambda_n \int_a^\infty z^{n-k}e^{-z} dz =\\ \\
	&=&
	\int_0^{1/a}t^k e^{-t}dt -
	\lambda_n \int_a^\infty z^{n-k}e^{-z} dz = \\ \\
	&=&
	\gamma\left(k+1, \frac{1}{a}\right) - \lambda_n \Gamma(n-k+1, a).
\end{eqnarray*}
Since $k \le n-1$ we have $a\le 1\le n-k$ and
\[
\Gamma(n-k+1,a)\ge
\Gamma(n-k+1,n-k)\ge
\frac{1}{2}(n-k)!\ge
\frac{1}{2}.
\]
For the second inequality $\Gamma(m+1, m) \ge \frac{m!}{2}$ see e.g.
\cite[Equation 8.10.13]{LIES}. Thus, for $n \ge 2$ we have, for every
$k\in\left\{0,\dots,n-1\right\}$
\[
\gamma\left(k+1, \frac{1}{a}\right) - \lambda_n \Gamma(n-k+1,a) <
k! - \frac{1}{2}n! \le
(n-1)! - \frac{1}{2}n! \le 0,
\]
which means $c_k < 0$. We conclude that $P_{n-1}(b) < 0$ for all
$b > 0$, which implies that
\[
\int_a^\infty (z+h)^{n-1} m(z) dz > 0,
\]
i.e. the derivative $\frac{\partial F}{\partial b}$ is positive at
any point $(a, b)$ satisfying \eqref{eq-ab-is-max} and
\eqref{eq-Fa-vanishes}.
\end{proof}
We shall now prove the main theorem.\\ \\
\noindent{\bf Proof of Theorem \ref{Thm-maximizers}.}
By \eqref{Eq-2D-sup}, existence of a maximizer for $s^\J$ is
equivalent to existence of a maximizer for $s_n^\J$, which is
verified in Lemma \ref{lem-Maximizer-exists}. Thus the statement of
Theorem \ref{Thm-maximizers} will follow, if we show that a maximizer
must be of the form $\psi_{K, a}$. Let $\varphi \in \cvx_0(\R^n)$ be
a maximizer of $s^\J$. As before, let $\psi = \varphi^\ast|_{\R^+}$,
where $\varphi^\ast$ is the symmetric of rearrangement $\varphi$.
Then $\psi$ is a maximizer of $s_n^{\J}$, and by Lemma
\ref{lem-Maximizer-unique}, $\psi = T_{\frac{a}{x_0}, b, x_0}$, for some
$a\in (0, \infty),\, b\in[0,\infty],\, x_0\in (0,\infty)$. Note that
$\psi_{x_0} = T_{a, b x_0, 1}$, which by Lemma
\ref{lem-infinite-h} implies $b = \infty$. Thus
$\psi = T_{\frac{a}{x_0}, \infty, x_0}$ and $\varphi^{\ast} =
\max
\left\{
1_{x_0 B_2^n}^\infty \, , \, \frac{a}{x_0}||\cdot||_{B_2^n}
\right\}$.
The level sets of $\varphi^\ast$ are given by
\[
L_z(\varphi^\ast) =
\left\{
\begin{array}{lr}
\frac{x_0}{a} z B_2^n		&~~:~0 \le z\le a \\
x_0 B_2^n				&~:~~~~~~a \le z      \\
\end{array}
\right\}.
\]
This implies that
$z\mapsto \left(\vol_n \left(L_z\varphi\right)\right)^\frac1n$ is
linear on $[0, a]$ and constant on $[a, \infty)$. By the
equality condition of the Brunn-Minkowski inequality applied to the
sets $K := L_{a}(\varphi)$ and $L_{z}(\varphi)$, all level sets of
$\varphi$ are homothetic to $K$. We get
\[
L_z(\varphi) =
\left\{
\begin{array}{lr}
\frac{z}{a} K	&~~:~0 \le z\le a\\
K					&~:~~~~~~a \le z       \\
\end{array}
\right\}.
\]
In other words $\varphi = \psi_{K, a}$ for some positive number
$a$, as required. Note that since $\psi_{x_0} = T_{a, \infty, 1}$
maximizes $s^\J_n$, we get from Lemma \ref{lem-A-Bound} that
\[
\frac{n}{n+n^{\frac23}} \le n a \le \frac{n}{n-n^{\frac23}},
\]
and the proof is complete.
\qed

%
%
%
%
%
%
%
%
%
%

\section{Asymptotically sharp bounds for $s^\J$}
In this section we prove Theorem \ref{Thm-asymp}. In our estimates we
use the Gamma function and the incomplete Gamma functions, defined as
follows. The Gamma function is given by:
\[
\Gamma(n+1) = \int_0^\infty t^{n} e^{-t} dt.
\]
The incomplete Gamma functions are defined by:
\begin{eqnarray*}
	\gamma(n+1, a) &=& \int_0^a t^n e^{-t}dt, \\ \\
	\Gamma(n+1, a) &=& \int_a^\infty t^n e^{-t} dt.
\end{eqnarray*}
Replacing the exponent $e^{-t}$ by its maximal and minimal values
on $[0,a]$ yields, for any $a\le 1$:
\begin{equation} \label{eq-gamma-small-a-bound}
e^{-a}\frac{a^{n+1}}{n+1} \leq \gamma(n+1,a) \le \frac{a^{n+1}}{n+1}.
\end{equation}
The following lemma is an upper bound on $\Gamma(n+1, a)$.
\begin{lem}\label{lem-Gamma-tail}
Let $n\ge1$ and $0 < t < 2(n+1)$. Then
\begin{equation}\label{eq-Gamma-bound}
\int_0^{n+1+t} s^n e^{-s} ds \ge
\left( 1 - e^{-\frac{t^2}{8(n+1)}} \right) n!
\end{equation}
\end{lem}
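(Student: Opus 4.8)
The plan is to estimate the tail $\Gamma(n+1,n+1+t)$ of the incomplete Gamma function by reinterpreting the integral probabilistically. The integral $\frac{1}{n!}\int_0^{\infty} s^n e^{-s}\,ds = 1$ is the total mass of a Gamma$(n+1,1)$ density, so $\frac{1}{n!}\int_0^{n+1+t} s^n e^{-s}\,ds = \mathbb{P}(X \le n+1+t)$ where $X \sim \mathrm{Gamma}(n+1,1)$ has mean $n+1$. Thus \eqref{eq-Gamma-bound} is equivalent to the concentration bound $\mathbb{P}(X - \mathbb{E}X > t) \le e^{-t^2/(8(n+1))}$ for $0 < t < 2(n+1)$. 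I would prove this via the standard exponential (Chernoff/Bernstein) method: write $X = \sum_{i=1}^{n+1} Y_i$ as a sum of i.i.d. standard exponentials (or just use the moment generating function of the Gamma distribution directly), namely $\mathbb{E}e^{\theta X} = (1-\theta)^{-(n+1)}$ for $0 < \theta < 1$.

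By Markov's inequality applied to $e^{\theta X}$, for any $\theta \in (0,1)$,
\[
\mathbb{P}(X > n+1+t) \le e^{-\theta(n+1+t)}(1-\theta)^{-(n+1)}
= \exp\!\Big( -\theta t + (n+1)\big(-\theta - \log(1-\theta)\big) \Big).
\]
Using the elementary inequality $-\theta - \log(1-\theta) \le \frac{\theta^2}{2(1-\theta)}$, the exponent is at most $-\theta t + (n+1)\frac{\theta^2}{2(1-\theta)}$. Now I would choose $\theta$ to make this negative and as small as possible. A clean choice is $\theta = \frac{t}{2(n+1)}$, which lies in $(0,1)$ precisely because $t < 2(n+1)$; then $1-\theta > \tfrac12$, so $\frac{\theta^2}{2(1-\theta)} < \theta^2$, and the exponent is bounded by $-\theta t + (n+1)\theta^2 = -\frac{t^2}{2(n+1)} + \frac{t^2}{4(n+1)} = -\frac{t^2}{4(n+1)}$. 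This actually gives the stronger bound $e^{-t^2/(4(n+1))}$; the constant $8$ in the statement leaves comfortable slack, so even a cruder estimate of the logarithm term suffices.

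Translating back: $\int_0^{n+1+t} s^n e^{-s}\,ds = n! - \Gamma(n+1,n+1+t) = n!\big(1 - \mathbb{P}(X > n+1+t)\big) \ge n!\big(1 - e^{-t^2/(8(n+1))}\big)$, which is \eqref{eq-Gamma-bound}. **The only mildly delicate point** is the choice of $\theta$ and verifying $\theta \in (0,1)$, which is exactly where the hypothesis $t < 2(n+1)$ is used; everything else is a routine application of the exponential moment method, and since the paper only needs the weaker constant $8$, there is no need to optimize $\theta$ carefully. An alternative, purely calculus-based route — bounding $s^n e^{-s}$ on $[n+1+t,\infty)$ by comparison with a Gaussian around the mode, or integrating the inequality $\frac{d}{ds}(s^{n+1}e^{-s}) = s^n e^{-s}(n+1-s)$ — would also work, but the probabilistic argument is shortest and most transparent.
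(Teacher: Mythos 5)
Your approach is exactly the paper's: interpret $\frac{1}{n!}\int_0^{n+1+t}s^ne^{-s}\,ds$ as $\mathbb P(X\le n+1+t)$ for $X\sim\mathrm{Gamma}(n+1,1)$ and bound the tail by the Chernoff/exponential-moment method. The structure, the Markov step, and the use of the moment generating function all match.

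There is, however, a genuine (if small) slip in the optimization step. You choose $\theta=\frac{t}{2(n+1)}$ and note $\theta\in(0,1)$ follows from $t<2(n+1)$, but then assert ``$1-\theta>\tfrac12$'', which actually requires $\theta<\tfrac12$, i.e.\ $t<n+1$. On the remaining range $n+1\le t<2(n+1)$ you have $\tfrac12\le\theta<1$, so $\frac{\theta^2}{2(1-\theta)}\ge\theta^2$ and your bound $-\theta t+(n+1)\theta^2$ is no longer valid; in fact as $\theta\to1^-$ the term $\frac{\theta^2}{2(1-\theta)}$ blows up, so the advertised stronger constant $e^{-t^2/(4(n+1))}$ is not obtained by this choice of $\theta$. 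The fix is precisely what the paper does: take $\theta=\frac{t}{4(n+1)}$, which lies in $(0,\tfrac12)$ for all $t<2(n+1)$; then $1-\theta>\tfrac12$ holds, and the exponent is $\le -\theta t+(n+1)\theta^2=-\frac{3t^2}{16(n+1)}<-\frac{t^2}{8(n+1)}$, giving the claimed bound. (Equivalently, the paper bounds $-\theta-\log(1-\theta)\le 2\theta^2$ on $(0,\tfrac12)$ and then optimizes.) You do flag that there is ``comfortable slack'' and that a cruder estimate suffices, which is correct in spirit, but as written the argument does not cover the full stated range of $t$.
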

\begin{proof}
Let $X$ be a random variable with density $\frac{s^n e^{-s}}{n!}$, so
that $\E[X] = n+1$.
For $\theta\in(0,\frac12)$, let $Y = e^{\theta\left(X-\E[X]\right)}$, so
that $\E[Y] = \left( \frac1{e^\theta(1-\theta)}\right)^{n+1} \le
e^{2\theta^2(n+1)}$.
Choose $a = e^{\theta t}$ and use Markov's inequality for $Y$ to get
\begin{eqnarray*}\label{eq-Markov}
P\left(X       \ge n + 1 + t\right) &=&
P\left(X-\E[X] \ge t\right) =
P\left(   Y    \ge a\right) \le
\frac{\E[Y]}{a} \le
e^{2\theta^2(n+1)-\theta t}
\end{eqnarray*}
Optimizing for $\theta$, we choose $\theta = \frac{t}{4(n+1)} < \frac12$
to get
\[
\int_{n+1+t}^\infty \frac{s^n e^{-s} ds}{n!} =
P\left(X       \ge n + 1 + t\right) \le
e^{-\frac{t^2}{8(n+1)}},
\]
which completes the proof.
\end{proof}

\begin{rem}
Lemma \ref{lem-Gamma-tail} is a special case of a more general
concentration property due to Klartag, where $e^{-s}$ is replaced by
an arbitrary log concave function (see Lemmas 4.3 and 4.5 in
\cite{Klartag}).
\end{rem}

By Lemma \ref{lem-infinite-h} we get that, defining $G:(0,\infty)\to
(0,\infty)$ by
\[
G(a) =
s^{\J}_n(T_{a, \infty, 1}) =
\frac
{e^{-\frac1a} + a^n \int_a^\infty e^{-\frac1z} z^{-(n+2)} dz}
{\gamma(n+1,a) + a^ne^{-a}},
\]
we have $\lambda_n = \max_{a\in\left[z_1, z_2\right]} G(a)$.
The curious reader may verify, similarly to the way
\eqref{eq-Ni-are-lambda-Mi} is obtained in the proof of Lemma
\ref{lem-infinite-h}, that $G'(a) = 0$ and $G(a) = \lambda_n$ imply
\begin{equation}\label{eq-N1=lambda-N1}
\lambda_n =
\frac{e^{-\frac1a}}{\gamma(n+1,a)},
\end{equation}
\begin{equation}\label{eq-N2=lambda-N2}
\lambda_n =
\frac{\int_a^\infty e^{-\frac1z} z^{-(n+2)} dz}{e^{-a}} =
e^a \gamma\left(n+1, \frac{1}{a}\right).
\end{equation}
We shall use \eqref{eq-N2=lambda-N2}, together with the bounds on the
partial Gamma functions, to prove the asymptotically sharp bounds on
$\lambda_n$.\\ \\
\noindent{\bf Proof of Theorem \ref{Thm-asymp}.}
We want to show that there exist positive constants $c, C$ such that
for $n$ large enough:
\[
\left(1+\frac{c}{n}\right) n! \le
\lambda_n \le
\left(1+\frac{C}{n}\right) n!.
\]
By Lemma \ref{lem-A-Bound}, for any $\alpha \in (\frac12, 1)$ there
exists $n_0 = n_0(\alpha)$ such that $n > n_0$ implies
$a < \frac{1}{n - n^\alpha}$. Thus, there exists a universal constant
$C > 0$ such that
\[
e^a < 1 + \frac{C}{n}.
\]
Combining the above with \eqref{eq-N2=lambda-N2} yields the required
upper bound:
\[
\lambda_n < \left(1+\frac{C}{n}\right) n!.
\]
For the lower bound, we choose a point $a = \frac{1}{2(n+1)}$ and
show that
$G(a) \ge \left(1+\frac{c}{n}\right) n!$. Note that $\frac1{3n} < a <
\frac1{2n}$. By \eqref{eq-gamma-small-a-bound} we have
\[
\frac{\gamma(n+1,a)}{a^ne^{-a}} \le
\frac{a e^a}{(n+1)} < \frac1{n^2}.
\]
We use \eqref{eq-Gamma-bound}, and the inequalities $e^x > 1 + x$
and $\frac{1}{x + 1} > 1 - x$ ~for positive $x$, to get
\begin{eqnarray*}
G(a) &=&
\frac
{e^{-\frac1a} + a^n \int_a^\infty e^{-\frac1z}z^{-(n+2)} dz}
{\gamma(n+1,a) + a^ne^{-a}} >
\frac{a^n \int_a^\infty e^{-\frac1z}z^{-(n+2)} dz}{\gamma(n+1,a) + a^ne^{-a}} = \\ \\
&=&
\frac{e^a \int_0^\frac1a s^n e^{-s} ds}{\frac{\gamma(n+1,a)}{a^n e^{-a}} + 1} >
(1+a)\left(1 - \frac{\gamma(n+1,a)}{a^n e^{-a}}\right) \int_0^{2(n + 1)} s^n e^{-s} ds
> \\ \\
&>&
\left(1 + \frac1{3n}\right)
\left(1 - \frac1{n^2}\right)
\left(1 - e^{-\frac{n+1}{8}}\right)
n!
\end{eqnarray*}
which for $n$ large enough implies $G(a) > \left(1 + \frac{c}n\right) n!$.

\qed

\newpage
\bibliographystyle{amsplain}

\smallskip \noindent
Dan Florentin \\
Department of Mathematical Sciences  \\
Kent State University, Kent, OH, 44242, USA  \\
{\it email}: danflorentin@gmail.com  \\
%
%

\smallskip \noindent
Alexander Segal \\
Afeka Academic College of Engineering, Tel Aviv, 69107, Israel\\
{\it email}: segalale@gmail.com  \\

\end{document}